\date{}
\theoremstyle{plain}
\newtheorem{thm}{Theorem}[section]
\newtheorem{lem}[thm]{Lemma}
\newtheorem{Lem1}{Lemma}
\newtheorem{Thm}{Theorem}
\newtheorem{pro}[thm]{Proposition}
\newtheorem{example}{Example}
\newtheorem{cla}{Claim}
\newtheorem*{Rem}{Remark}
\theoremstyle{definition}
\newtheorem{definition}{Definition}
\numberwithin{equation}{section}
\begin{document}

\title
[~~~]{ A new approach to Steiner symmetrization of coercive convex
functions}

\author[Youjiang Lin]{Youjiang Lin}
\address{School of Mathematical Sciences, Peking University, Beijing, 100871, China;
Department of Mathematics, Department of Mathematics, Shanghai University, Shanghai, 200444, China}
 \email{\href{mailto: YOUJIANG LIN
<lxyoujiang@126.com>}{lxyoujiang@126.com}}
\author[Gangsong Leng]{Gangsong Leng}
\address{Department of Mathematics, Shanghai University, Shanghai, 200444, China} \email{\href{mailto:
Gangsong Leng <gleng@staff.shu.edu.cn>}{gleng@staff.shu.edu.cn} }
\author[Lujun Guo]{Lujun Guo}
\address{Department of Mathematics, Shanghai University, Shanghai, People's Republic of China 200444} \email{\href{mailto:
Lujun Guo <lujunguo0301@163.com>}{lujunguo0301@163.com} }

\begin{abstract} In this paper, a new approach of defining Steiner symmetrization of
coercive convex functions is proposed and some fundamental
properties of the new Steiner symmetrization are proved.
 Further, using the new Steiner symmetrization, we give a  different approach to prove
 a functional version of the Blaschke-Santal\'{o} inequality due to
Ball \cite{Ba86}.
\end{abstract}

\thanks{2010 Mathematics Subject
Classification.46E30, 52A40.}

\keywords{Rearrangements of
functions; Steiner symmetrizations; Coercive convex functions;
Blaschke-Santal\'{o} inequality}

\thanks{The authors would like to acknowledge the support from China Postdoctoral Science
Foundation Grant 2013M540806, National Natural Science Foundation of China under grant
11271244 and National Natural Science Foundation of China under grant 11271282 and the
973 Program 2013CB834201.}

\maketitle

\section{Introduction}
The purpose of this paper is to introduce a new way of defining
Steiner symmetrization for coercive convex functions, and to explore
its applications. Our new definition is motivated by and can be
regarded as an improvement of a functional Steiner symmetrization of
\cite{Ar04}. In particular, our new definition has a key property:
the invariance of integral, which is not true for the definition of
\cite{Ar04}. Moreover, our definition provides a new approach to the
familiar functional Steiner symmetrization (see \cite{Br74,Bu97}),
but we do not use geometric Steiner symmetrization and our approach
is more suitable for certain functional problems.

Steiner symmetrization was invented by Steiner \cite{St38} to prove
the isoperimetric inequality. For over 160 years Steiner
symmetrization has been a fundamental tool for attacking problems
regarding isoperimetry and related geometric inequalities
\cite{Ga02,Ga06,St38,Ta93}. Steiner symmetrization appears in the
titles of dozens of papers (see e.g.
\cite{Bi03,Bi11,Bo87,Bu97,Ci05,Fa76,Ga83,Kl03,Kl05,Lo86,Ma86,Mc67,Sc98})
and plays a key role in recent work such as
\cite{Ha09,Lu10,Sc05,Sc06}.

Steiner symmetrization is a type of rearrangement. In the 1970s,
interest in rearrangements was renewed, as mathematicians began to
look for geometric proofs of functional inequalities. Rearrangements
were generalized from smooth or convex bodies to measurable sets and
to functions in Sobolev spaces. Functional Steiner symmetrization,
as a kind of important rearrangement of functions, has been studied
in \cite{Ar04,Br74,Bu97,Bu09,Ci04,Ci06,Fo10}. In \cite{Br74},
Brascamp, Lieb, and Luttinger established that the spherical
symmetrization of a nonnegative function can be approximated in
$L^p(\mathbb{R}^{n+1})$ by a sequence of
 Steiner symmetrizations and rotations. In \cite{Bu97}, Burchard
 proved that Steiner symmetrization is continuous in $W^{1,p}(\mathbb{R}^{n+1})$,
$1\leq p<\infty$, for every dimension $n\geq 1$, in the sense that
$f_k\rightarrow f$ in $W^{1,p}$ implies $S f_k\rightarrow S f$ in
$W^{1,p}$. In \cite{Fo10}, Fortier gave a thorough review and
exposition of results regarding approximating the symmetric
decreasing rearrangement by polarizations and Steiner
symmetrizations.

 For a nonnegative measurable function $f$, the familiar definition of its Steiner
symmetrization (see \cite{Br74, Bu97, Bu09, Fo10}) is defined as following:
\begin{definition}\label{c25}
For a measurable function $f:\mathbb{R}^n\rightarrow\mathbb{R}^+$,
let $m$ denote the Lebesgue measure, if $m([f>t])<+\infty$ for all
$t>0$, then its Steiner symmetrization is defined as
\begin{eqnarray}\label{1} \bar{S}_u f(x)=\int_{0}^{\infty}\mathcal
{X}_{S_u E(t)}(x)dt,
\end{eqnarray}
where $S_u E(t)$ is the Steiner symmetrization of the level set
$E(t):=\{x\in\mathbb{R}^n: f(x)>t\}$ about $u^{\perp}$ and $\mathcal
{X}_{A}$ denotes the characteristic function of set $A$.
\end{definition}
During the study of the analogy between convex bodies and
log-concave functions, Artstein-Klartag-Milman in \cite{Ar04}
defined another functional Steiner transformation as follows:
\begin{definition}\label{c26}
For a coercive convex function $f: \mathbb{R}^n\rightarrow
\mathbb{R}\cup\{+\infty\}$ and a hyperplane $H=u^{\bot}$ ($u\in
S^{n-1}$) in $\mathbb{R}^n$, for
 any $x=x^{\prime}+tu$, where
$x^{\prime}\in H$ and $t\in \mathbb{R}$, we define the {\it Steiner
symmetrization} $\widetilde{S}_u f$ of $f$ about $H$ by
\begin{eqnarray}
(\widetilde{S}_u f)(x)=\inf_{t_1+t_2=t}[\frac{1}{2}
f(x^{\prime}+2t_1u)+\frac{1}{2}f(x^{\prime}-2t_2u)].
\end{eqnarray}
\end{definition}
 In this paper, we introduce a new way of defining the
functional Steiner symmetrization for coercive convex functions.
 A function $f:\mathbb{R}^n\rightarrow
\mathbb{R}\cup \{+\infty\}$, not identically $+\infty$, is called
{\it convex} if $$f(\alpha x+(1-\alpha)y)\leq \alpha
f(x)+(1-\alpha)f(y)$$ for all $x$, $y\in\mathbb{R}^n$ and for
$0\leq\lambda\leq1$. A convex function $f:\mathbb{R}^n\rightarrow
\mathbb{R}\cup \{+\infty\}$ is called {\it coercive} if
$\lim_{|x|\rightarrow +\infty}f(x)=+\infty$.

\begin{definition}\label{15}For
a coercive convex function $f: \mathbb{R}^n\rightarrow
\mathbb{R}\cup\{+\infty\}$ and a hyperplane $H=u^{\bot}$ ($u\in
S^{n-1}$) in $\mathbb{R}^n$, for
 any $x=x^{\prime}+tu$, where
$x^{\prime}\in H$ and $t\in \mathbb{R}$, we define the {\it Steiner
symmetrization} $S_u f$ of $f$ about $H$ by
\begin{eqnarray}\label{3}
(S_u f)(x)=\sup_{\lambda\in [0,1]}\inf_{t_1+t_2=t}[\lambda
f(x^{\prime}+2t_1u)+(1-\lambda)f(x^{\prime}-2t_2u)].
\end{eqnarray}
\end{definition}

Our definition $S_u f$ is motivated by and can be regarded as an
improvement of $\widetilde{S}_u f$ in Definition \ref{c26}.
 When compared with $\bar{S}_uf$ in Definition \ref{c25}, our definition symmetrizes a
 parabola-like (one-dimension) cure once at a time instead of
 symmetrizing the level set as in $\bar{S}_uf$.

The rest of the paper is organized as follows. In Section 2, we
explore the analogy between convex bodies and coercive convex
functions using our new definition (see Table \ref{a59}).
\begin{table}[!ht] \label{a59}{\small Table 1. A contrast between convex bodies and coercive
convex functions on Steiner symmetrization}\vspace{0mm}
\begin{center}\small\begin{tabular}{|c|p{160pt}|p{170pt}|} \hline
~& \multicolumn{1}{|c|}{Convex bodies}& \multicolumn{1}{|c|}{Coercive convex functions} \\
\hline 1& For a convex body $K$, $S_u K$ is still a convex body and symmetric about $u^{\perp}$.
&For a coercive convex function $f$,
 $S_u f$ is still a coercive convex function and symmetric about $u^{\perp}$.\\
\hline 2&$Vol_n(S_u K)=Vol_n(K)$. & $\int_{\mathbb{R}^n}\exp(-S_u f)=\int_{\mathbb{R}^n}\exp(-f)$.\\
\hline 3& $K$ can be transformed into an unconditional body using
$n$ Steiner symmetrizations.&$f$ can be transformed into an
unconditional function using $n$ Steiner symmetrizations.\\
\hline 4&For any convex bodies $K_1\subset K_2$, then $S_u K_1\subset S_u K_2$.
&For any coercive convex functions $f_1\leq f_2$, then $S_u f_1\leq S_u f_2$.\\
\hline 5&If $K$ is a symmetric about $z$, then $S_u K$ is symmetric
about
$z|u^{\perp}$.&If $f$ is even about $z$, then $S_u f$ is even about $z|u^{\perp}$.\\
\hline 6&If the sequence $\{K_i\}$ converges in the Hausdorff metric to $K$,
then the sequence $\{S_u K_i\}$ will converge to $S_u K$.&If the sequence $\{\exp(-f_i)\}$
converges in the $L^p$ distance to $\exp(-f)$, then the sequence $\{\exp(-S_u f_i)\}$ will converge to $\exp(-S_u f)$.\\
\hline 7&There is a sequence of directions $\{u_i\}$ so that the
sequence of convex bodies $K_i=S_{u_i}\dots S_{u_1} K$ converges to
the ball with the same volume as $K$.&There is a sequence of
directions $\{u_i\}$ so that the sequence of log-concave functions
$\exp(-f_i)$, where $f_i=S_{u_i}\dots S_{u_1} f$, converges to a
radial function
with the same integral as $\exp(-f)$.\\
\hline
\end{tabular}
\end{center}
\end{table}
In Section 3, we will elaborate on the relations between Definition
\ref{15} and Definitions \ref{c25}, \ref{c26}. In Section 4, we give
a completely different approach to prove
 a functional version of the Blaschke-Santal\'{o} inequality due to
Ball \cite{Ba86}.

\section{ The functional Steiner symmetrization}

We first study the one-dimensional case. In Definition \ref{15},
when $n=1$, $S^0=\{-1, 1\}$ and $H=\{0\}$, it is clear that $(S_1
f)(x)=(S_{-1}f)(x)$ for any $x\in\mathbb{R}$. Let $Sf$ denote
Steiner symmetrization of one-dimensional function, then
\begin{eqnarray}\label{c16}
Sf(x)=\sup_{\lambda\in [0,1]}\inf_{x_1+x_2=x}[\lambda
f(2x_1)+(1-\lambda)f(-2x_2)].
\end{eqnarray}

\begin{Thm}\label{b10}If $f: \mathbb{R}\rightarrow \mathbb{R}\cup\{+\infty\}$ is a
coercive convex function, then  $Sf(x)$ is a coercive even convex
function and for any $s\in\mathbb{R}$,
\begin{eqnarray}\label{a2}Vol_1 ([f\leq s])=Vol_1([Sf\leq
s]),\end{eqnarray} where $[f\leq s]=\{x\in\mathbb{R}: f(x)\leq s\}$
denotes the sublevel set of $f$.
\end{Thm}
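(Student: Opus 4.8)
The plan is to analyze the one-dimensional formula \eqref{c16} directly. First I would record the elementary observation that for fixed $\lambda\in[0,1]$ the inner infimum
$$g_\lambda(x):=\inf_{x_1+x_2=x}\bigl[\lambda f(2x_1)+(1-\lambda)f(-2x_2)\bigr]$$
is the infimal convolution of the two convex functions $x\mapsto\lambda f(2x)$ and $x\mapsto(1-\lambda)f(-2x)$; hence each $g_\lambda$ is convex, and $Sf=\sup_\lambda g_\lambda$ is convex as a supremum of convex functions. Coercivity of $Sf$ should follow because $f$ coercive forces, for every $\lambda\in(0,1)$, the function $g_\lambda$ to be coercive (a standard infimal-convolution fact), and taking $\lambda=\tfrac12$ already gives a coercive lower bound $Sf\ge g_{1/2}$; one must also check $Sf\not\equiv+\infty$, which holds by evaluating at a minimizer of $f$. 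Evenness: substituting $x\mapsto-x$ swaps the roles of $x_1$ and $x_2$ and simultaneously replaces $\lambda$ by $1-\lambda$; since the outer supremum runs over all of $[0,1]$, the value is unchanged, so $Sf(-x)=Sf(x)$.

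The substance of the theorem is the sublevel-set identity \eqref{a2}. I would first dispose of trivial cases: if $s<\min f$ both sides are $0$ (using $Sf\ge g_{1/2}\ge\min f$ and the analogous bound for $f$), and if $[f\le s]$ is unbounded the hypotheses are violated, so assume $[f\le s]=[a,b]$ with $a\le b$ a nonempty compact interval. By convexity and coercivity $[Sf\le s]$ is also a compact interval, and by evenness it has the form $[-c,c]$; the goal becomes $2c=b-a$. To compute $c$ I would show the two-sided bound $Sf\bigl(\tfrac{b-a}{2}\bigr)\le s$ and, for every $\varepsilon>0$, $Sf\bigl(\tfrac{b-a}{2}+\varepsilon\bigr)>s$. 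For the first, choosing the split $x_1=b/2$, $x_2=-a/2$ (so $2x_1=b$, $-2x_2=a$) gives $g_\lambda\bigl(\tfrac{b-a}{2}\bigr)\le\lambda f(b)+(1-\lambda)f(a)\le s$ for every $\lambda$, hence $Sf\bigl(\tfrac{b-a}{2}\bigr)\le s$.

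The harder direction — showing $Sf\bigl(\tfrac{b-a}{2}+\varepsilon\bigr)>s$, which is where the outer $\sup_\lambda$ is essential and the reason \eqref{a2} fails for $\widetilde S_u f$ — is the main obstacle. Here I would argue as follows. Fix $x>\tfrac{b-a}{2}$ and suppose toward a contradiction that $g_\lambda(x)\le s$ for every $\lambda\in[0,1]$. For each $\lambda$ pick a near-optimal split $x=x_1^\lambda+x_2^\lambda$ with $\lambda f(2x_1^\lambda)+(1-\lambda)f(-2x_2^\lambda)\le s+\delta$. Letting $\lambda\to1$ forces $f(2x_1^\lambda)\le s+o(1)$, hence $2x_1^\lambda\in[a-o(1),b+o(1)]$, so $\limsup_{\lambda\to1}x_1^\lambda\le b/2$; symmetrically $\liminf_{\lambda\to0}x_2^\lambda\le -a/2$, i.e.\ $-2x_2^\lambda$ stays near $[a,b]$. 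The point is to extract from these one-sided controls a single $\lambda$ (obtained by a continuity/intermediate-value argument on the map $\lambda\mapsto x_1^\lambda$, or by a compactness argument on near-optimal splits) for which \emph{both} $2x_1^\lambda$ and $-2x_2^\lambda$ are forced to lie in a small neighborhood of $[a,b]$ while $x_1^\lambda+x_2^\lambda=x>\tfrac{b-a}{2}$ — impossible since $x_1^\lambda\le b/2+o(1)$ and $x_2^\lambda\le -a/2+o(1)$ would give $x\le\tfrac{b-a}{2}+o(1)$. Making the "extract a single $\lambda$" step rigorous is the crux; I expect it to run through the convexity of each $g_\lambda$ together with the fact that the infimal convolution of coercive convex functions attains its infimum, so that the near-optimal splits can be taken to be genuine minimizers $x_1^\lambda, x_2^\lambda$ depending monotonically (hence measurably, and in fact continuously off a countable set) on $\lambda$, after which the one-sided limits above close the argument. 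Once $2c=b-a$ is established, \eqref{a2} follows.
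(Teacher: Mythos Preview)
Your treatment of evenness, coercivity, and especially convexity is correct and in fact cleaner than the paper's. The paper never observes that each $g_\lambda$ is an infimal convolution; instead it first establishes a pointwise structural formula for $Sf$ and then proves convexity by a hands-on case analysis (Lemma~\ref{b14}(v), the argument around \eqref{a61}--\eqref{a66}, plus a separate treatment when $\operatorname{dom}f\ne\mathbb R$). Your one-line ``sup of infimal convolutions'' argument replaces all of that.

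The genuine gap is in the hard direction of \eqref{a2}, and your proposed strategy does not close. Assuming $g_\lambda(x)\le s$ for every $\lambda$ and tracking the minimizers $u_\lambda:=2x_1^\lambda$, the monotonicity you anticipate is real: $u_\lambda$ is nonincreasing in $\lambda$, moving from a neighborhood of $[a+2x,b+2x]$ (as $\lambda\to0$) to a neighborhood of $[a,b]$ (as $\lambda\to1$). But these intervals are disjoint when $x>\tfrac{b-a}{2}$, so $u_\lambda$ simply slides across the gap $(b,a+2x)$; at no $\lambda$ are \emph{both} $u_\lambda$ and $u_\lambda-2x$ forced near $[a,b]$, and the contradiction hypothesis does not prevent this. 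The ``one-sided limits'' you record are correct but do not meet.

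What does work is either the paper's route or a minimax swap. The paper first runs an intermediate-value argument on $u\mapsto f(u)-f(u-2x)$ (not on $\lambda$) to find $x'$ with $f(x')=f(x'-2x)$, and then --- this is the key idea you are missing --- chooses $\lambda_0$ so that $\lambda_0 f'_r(x')+(1-\lambda_0)f'_r(x'-2x)=0$, which makes $x'/2$ the minimizer of $\Phi_{\lambda_0}$ and yields $Sf(x)=f(x')$ exactly; \eqref{a2} is then immediate. Alternatively, and more in the spirit of your infimal-convolution setup: since $(\lambda,u)\mapsto\lambda f(u)+(1-\lambda)f(u-2x)$ is affine in $\lambda\in[0,1]$ and convex in $u$, Sion's minimax theorem gives
\[
Sf(x)=\sup_{\lambda}\inf_{u}=\inf_{u}\sup_{\lambda}=\inf_{u}\max\{f(u),\,f(u-2x)\},
\]
so $Sf(x)\le s$ iff $[f\le s]\cap\bigl([f\le s]+2x\bigr)\ne\emptyset$, i.e.\ iff $|x|\le\tfrac{b-a}{2}$. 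Either route still requires the separate treatment of $\operatorname{dom}f\ne\mathbb R$ that the paper carries out in its Case~(2).
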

The following lemma is straightforward, and we omit its proof.
\begin{Lem1}\label{b14}
Let $f: \mathbb{R}\rightarrow \mathbb{R}$ be a coercive convex
function, then we have

(i) If $a=\inf f(t)$, then $a\in (-\infty,+\infty)$ and
$f^{-1}(a)=\{x\in\mathbb{R}: f(x)=a\}$ is a nonempty finite closed
interval $[\mu,\nu]$, where $\mu$ may equal to $\nu$.

(ii) $f(t)$ is strictly decreasing on the interval $(-\infty,\mu]$
and strictly increasing on the interval $[\nu,+\infty)$.

(iii) If $f(c)=f(d)$ and $c<d$, then $\mu<d$ and $c<\nu$.

(iv)  For $c$ and $d$ given in (iii), we have the right derivative
$f^{\prime}_{r}(d)\geq 0$ for $f$ is increasing on $[\mu,+\infty)$,
we also have $f^{\prime}_{r}(c)\leq 0$ for $f$ is decreasing on
$(-\infty,\nu]$.

(v) For two intervals $[a, a+t_0]$ and $[b, b+t_0]$ with the same
length $t_0>0$, if $f(a)=f(a+t_0)$, then either $f(b)\geq f(a)$ or
$f(b+t_0)\geq f(a+t_0)$.
\end{Lem1}

\noindent{\it Proof of Theroem \ref{b10}.}
 First, we show that $Sf$ is even. For any  $x\in \mathbb{R}$, by (\ref{c16}), we have
\begin{eqnarray}\label{22}
Sf(-x)&=&\sup_{\lambda\in [0,1]}\inf_{x_2\in \mathbb{R}}[\lambda
f(-2x_2-2x)+(1-\lambda)f(-2x_2)]
\nonumber\\
&=&\sup_{\lambda\in [0,1]}\inf_{x_2\in \mathbb{R}}[\lambda
f(2x_2-2x)+(1-\lambda)f(2x_2)]
\nonumber\\
&=&\sup_{\lambda^{\prime}\in [0,1]}\inf_{x_2\in
\mathbb{R}}[\lambda^{\prime}f(2x_2)+(1-\lambda^{\prime})f(2x_2-2x)]\nonumber\\
&=&Sf(x),
\end{eqnarray}
which implies that $Sf$ is even.

 Let ${\rm dom}
f:=\{x\in\mathbb{R}^n: f(x)<+\infty\}$ denote the effective domain
of $f$. To prove the remaining part of the theorem, we shall
consider two cases: ${\rm dom} f =\mathbb{R}$ and ${\rm
dom}f\neq\mathbb{R}$.

{\bf Case (1) ${\rm dom} f =\mathbb{R}$.} There are two steps.

{\bf First Step.} We shall prove that $Sf(0)=\inf f$ and for any
$x>0$, there exists some $x^{\prime}\in\mathbb{R}$ such that
\begin{eqnarray}\label{26}
Sf(x)=f(x^{\prime})=f(x^{\prime}-2x).
\end{eqnarray}
Let $x=0$, by (\ref{c16}), we have
\begin{eqnarray}\label{c10}
Sf(0)&=&\sup_{\lambda\in[0,1]}\inf_{x_1+x_2=0}[\lambda
f(2x_1)+(1-\lambda) f(-2x_2)]\nonumber\\
&=&\inf_{x_1\in\mathbb{R}}f(2x_1)=\inf_{x\in\mathbb{R}} f(x).
\end{eqnarray}
For $x>0$, since $f$ is coercive and convex, there exists some
$x^{\prime}\in\mathbb{R}$ satisfying
\begin{eqnarray}\label{b6}
f(x^{\prime})=f(x^{\prime}-2x).
\end{eqnarray}
Indeed, let $f_x(x_1):=f(x_1)-f(x_1-2x)$, $a=\inf f$ and
$f^{-1}(a)=[\mu,\nu]$,  by Lemma \ref{b14}(ii), $f_x(x_1)<0$ if
$x_1<\mu$ and $f_x(x_1)>0$ if $x_1>\nu$. Since $f(x_1)$ and
$f(x_1-2x)$ are convex functions about $x_1\in\mathbb{R}$ and any
convex function is continuous on the interior of its effective
domain, thus $f_x(x_1)$ is continuous in $\mathbb{R}$. Therefore,
there exists some $x^{\prime}$ such that $f_x(x^{\prime})=0$.

Now we prove $Sf(x)=f(x^{\prime})$, where $x>0$ and $x^{\prime}$
satisfies equality (\ref{b6}). Let $G_x(\lambda)$ be a function
about $\lambda\in[0,1]$ defined as
\begin{eqnarray}
G_x(\lambda):=\inf_{x_1\in\mathbb{R}}[\lambda
f(2x_1)+(1-\lambda)f(2x_1-2x)].
\end{eqnarray}

For any $\lambda\in [0,1]$, choose $x_1=\frac{x^{\prime}}{2}$, we
have
\begin{eqnarray}
G_x(\lambda)\leq\lambda
f(x^{\prime})+(1-\lambda)f(x^{\prime}-2x)=f(x^{\prime}).
\end{eqnarray} Thus, $Sf(x)=\sup_{\lambda\in [0,1]}G_x(\lambda)\leq
f(x^{\prime})$.

On the other hand, we prove that there exists some $\lambda_0\in
[0,1]$ such that $G_x(\lambda_0)=f(x^{\prime})$. Since $f$ is a
convex function defined in $\mathbb{R}$ and by Theorem 1.5.2 in
\cite{Sc93}, both the right derivative $f^{\prime}_{r}$ and the left
derivative $f^{\prime}_{l}$ exist and $f^{\prime}_{l}\leq
f^{\prime}_{r}$.
\begin{cla}\label{d45} There exists some $\lambda_0\in[0,1]$ satisfying
\begin{eqnarray}\label{b1}\lambda_0f^{\prime}_{r}(x^{\prime})+(1-\lambda_0)f^{\prime}_{r}(x^{\prime}-2x)=0.
\end{eqnarray}
\end{cla}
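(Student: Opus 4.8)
The plan is to reduce the claim to the intermediate value theorem, once we have pinned down the signs of the two one-sided derivatives appearing in (\ref{b1}). Since we are in Case (1), ${\rm dom}\,f=\mathbb{R}$, so by Theorem 1.5.2 in \cite{Sc93} the right derivative $f^{\prime}_{r}$ exists and is finite at every point of $\mathbb{R}$; in particular $f^{\prime}_{r}(x^{\prime})$ and $f^{\prime}_{r}(x^{\prime}-2x)$ are well-defined real numbers.

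First I would record that $x>0$ forces $x^{\prime}-2x<x^{\prime}$, while (\ref{b6}) gives $f(x^{\prime})=f(x^{\prime}-2x)$. Applying Lemma \ref{b14}(iii) with $c=x^{\prime}-2x$ and $d=x^{\prime}$ yields $\mu<x^{\prime}$ and $x^{\prime}-2x<\nu$, where $f^{-1}(\inf f)=[\mu,\nu]$. Then Lemma \ref{b14}(iv), applied to the same pair $c,d$, gives
\begin{equation}\label{signineq}
f^{\prime}_{r}(x^{\prime})\geq 0\qquad\text{and}\qquad f^{\prime}_{r}(x^{\prime}-2x)\leq 0.
\end{equation}

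Next I would consider $g:[0,1]\to\mathbb{R}$ defined by $g(\lambda)=\lambda f^{\prime}_{r}(x^{\prime})+(1-\lambda)f^{\prime}_{r}(x^{\prime}-2x)$. Rewriting $g(\lambda)=\lambda\bigl(f^{\prime}_{r}(x^{\prime})-f^{\prime}_{r}(x^{\prime}-2x)\bigr)+f^{\prime}_{r}(x^{\prime}-2x)$ shows that $g$ is affine, hence continuous, on $[0,1]$, and by (\ref{signineq}) we have $g(0)=f^{\prime}_{r}(x^{\prime}-2x)\leq 0\leq f^{\prime}_{r}(x^{\prime})=g(1)$. The intermediate value theorem then produces $\lambda_0\in[0,1]$ with $g(\lambda_0)=0$, which is precisely (\ref{b1}). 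Alternatively, and more explicitly, one may take $\lambda_0=-f^{\prime}_{r}(x^{\prime}-2x)\big/\bigl(f^{\prime}_{r}(x^{\prime})-f^{\prime}_{r}(x^{\prime}-2x)\bigr)$ when the denominator is positive, and any $\lambda_0$ (say $\lambda_0=0$) when $f^{\prime}_{r}(x^{\prime})=f^{\prime}_{r}(x^{\prime}-2x)=0$; in every case the resulting $\lambda_0$ lies in $[0,1]$.

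I do not expect a real obstacle here: the substance of the claim is the sign information (\ref{signineq}), which is delivered by Lemma \ref{b14}, after which an affine function changing sign on $[0,1]$ obviously vanishes somewhere in $[0,1]$. The only points needing a moment's care are verifying that the hypotheses of Lemma \ref{b14}(iii)--(iv) genuinely apply here (namely $x^{\prime}\neq x^{\prime}-2x$, true since $x>0$, and $f(x^{\prime})=f(x^{\prime}-2x)$, established just above), and, in the degenerate case where one or both one-sided derivatives vanish, checking that the chosen $\lambda_0$ still belongs to the closed interval $[0,1]$ --- which it does.
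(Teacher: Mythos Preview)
Your proposal is correct and follows essentially the same approach as the paper: both derive the sign information $f^{\prime}_{r}(x^{\prime})\geq 0$, $f^{\prime}_{r}(x^{\prime}-2x)\leq 0$ from Lemma~\ref{b14}(iv), then split into the case where the difference is positive (yielding the explicit $\lambda_0$ you wrote) and the degenerate case where both derivatives vanish. Your use of the intermediate value theorem is just a repackaging of the same case split, and your explicit invocation of Lemma~\ref{b14}(iii) to justify the hypotheses of (iv) is a welcome bit of extra care that the paper leaves implicit.
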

\noindent{\it Proof of Claim \ref{d45}.} Since
$f(x^{\prime})=f(x^{\prime}-2x)$ and $x>0$, by Lemma \ref{b14}(iv),
we have $f_r^{\prime}(x^{\prime})\geq 0$ and
$f_r^{\prime}(x^{\prime}-2x)\leq 0$, thus
$f_r^{\prime}(x^{\prime})-f_r^{\prime}(x^{\prime}-2x)\geq 0$.

(i) If
 $f^{\prime}_{r}(x^{\prime})-f^{\prime}_{r}(x^{\prime}-2x)>0$,
then (\ref{b1}) can be obtained by choosing
\begin{eqnarray}
\lambda_0=\frac{-f^{\prime}_{r}(x^{\prime}-2x)}{f^{\prime}_{r}(x^{\prime})-f^{\prime}_{r}(x^{\prime}-2x)}.
\end{eqnarray}

(ii) If
$f^{\prime}_{r}(x^{\prime})-f^{\prime}_{r}(x^{\prime}-2x)=0$, then
$f_r^{\prime}(x^{\prime})=f_r^{\prime}(x^{\prime}-2x)= 0$, thus, for
any $\lambda_0\in [0,1]$, we can get (\ref{b1}). \hfill $\Box$\\

Choose a $\lambda_0$ satisfying (\ref{b1}), we define
\begin{eqnarray}\label{b7}\Phi_{\lambda_0}(x_1)=\lambda_0f(2x_1)+(1-\lambda_0)f(2x_1-2x).\end{eqnarray}

Since $f$ is a convex function, then $\Phi_{\lambda_0}$ is a convex
function about $x_1$. By (\ref{b1}), we have that the right
derivative and the left derivative of $\Phi_{\lambda_0}$ at
$x_1=\frac{x^{\prime}}{2}$ satisfy
\begin{eqnarray}\label{b2}
\Phi_{\lambda_0
r}^{\prime}(x_1)|_{x_1=\frac{x^{\prime}}{2}}=2\lambda_0f_r^{\prime}(x^{\prime})+2(1-\lambda_0)f_r^{\prime}(x^{\prime}-2x)=0,
\end{eqnarray}
and $\Phi_{\lambda_0
l}^{\prime}(x_1)|_{x_1=\frac{x^{\prime}}{2}}\leq\Phi_{\lambda_0
r}^{\prime}(x_1)|_{x_1=\frac{x^{\prime}}{2}}=0$.

By (\ref{b6}), (\ref{b7}) and the fact that if a convex function $f:
\mathbb{R}\rightarrow\mathbb{R}$ satisfies $f^{\prime}_r(x_0)\geq0$
and $f^{\prime}_l(x_0)\leq 0$ then $f(x_0)=\min\{ f(x):
x\in\mathbb{R}\}$, we have
\begin{eqnarray}\label{b4}
\inf_{x_1\in\mathbb{R}}\Phi_{\lambda_0}(x_1)=\Phi_{\lambda_0}(\frac{x^{\prime}}{2})=f(x^{\prime}).
\end{eqnarray}
By (\ref{b7}) and (\ref{b4}), we have
\begin{eqnarray}\label{d34}
Sf(x)=\sup_{\lambda\in [0,1]}G_x(\lambda)\geq
G_x(\lambda_0)=\inf_{x_1\in\mathbb{R}}\Phi_{\lambda_0}(x_1)=f(x^{\prime}).
\end{eqnarray}
Thus, we have $Sf(x)=f(x^{\prime})=f(x^{\prime}-2x)$.

{\bf Second Step.}  We shall prove that $Sf$ is coercive and convex,
and for any $s\in\mathbb{R}$, $Vol_1 ([Sf\leq s])=Vol_1([f\leq s])$.

First, we prove that $Sf$ is coercive. Suppose that there exists
$M_0>0$ and a sequence $\{x_n\}$ satisfying $|x_n|>n$ and
$Sf(x_n)<M_0$ for any positive integer $n$, then by (\ref{26}),
there exists $x_n^{\prime}$ such that
\begin{eqnarray}\label{c8}
Sf(x_n)=f(x_n^{\prime})=f(x_n^{\prime}-2x_n)<M_0.
\end{eqnarray}
Since $2\max\{|x_n^{\prime}|,|x_n^{\prime}-2x_n|\}\geq
|x_n^{\prime}|+|x_n^{\prime}-2x_n|\geq 2|x_n|>2n$, there is a
sequence $\{y_n\}$, where $y_n=x_n^{\prime}$ if $|x_n^{\prime}|\geq
|x_n^{\prime}-2x_n|$ and $y_n=x_n^{\prime}-2x_n$ if
$|x_n^{\prime}|\leq |x_n^{\prime}-2x_n|$, satisfying
$\lim_{n\rightarrow+\infty}|y_n|=+\infty$ and $f(y_n)<M_0$, which is
contradictory with $f$ is coercive.

Next, we prove that $Sf$ is a convex function on $\mathbb{R}$.
First, we prove that $Sf(x)$ is increasing on $[0,+\infty)$. In
fact, by (\ref{26}), for any $0< x_1<x_2$, there exist
$x_1^{\prime}$ and $x_2^{\prime}$ such that
$Sf(x_i)=f(x_i^{\prime})=f(x_i^{\prime}-2x_i)$ $(i=1,2)$. By Lemma
\ref{b14}(iii), for $\mu$ and $\nu$ given in Lemma \ref{b14}, we
have $x_i^{\prime}>\mu$ $(i=1,2)$ and $x_i^{\prime}-2x_i<\nu$
$(i=1,2)$. If $f(x_1^{\prime})> f(x_2^{\prime})$, since $f$ is
increasing on the interval $[\mu,+\infty)$, then
$x_1^{\prime}>x_2^{\prime}$. By $0<x_1<x_2$, we have
$x_1^{\prime}-2x_1>x_2^{\prime}-2x_2$. Since $f$ is decreasing on
the interval $(-\infty,\nu]$, we have $f(x_1^{\prime}-2x_1)\leq
f(x_2^{\prime}-2x_2)$, which is a contradiction. The contradiction
means that $f(x_1^{\prime})\leq f(x_2^{\prime})$, thus $Sf$ is
increasing on $[0,+\infty)$. Since $Sf$ is even, to prove $Sf$ is
convex on $\mathbb{R}$, it suffices to prove that $Sf$ is convex on
$[0,+\infty)$.

For any $0\leq x_1<x_2$ and $0<\alpha<1$, by (\ref{26}), let
$x_1^{\prime}$, $x_2^{\prime}$ and $x_0\triangleq (\alpha
x_1+(1-\alpha) x_2)^{\prime}$ be three real numbers satisfying
\begin{eqnarray}\label{a61}
Sf(x_1)=f(x^{\prime}_1)=f(x_1^{\prime}-2x_1),
\end{eqnarray}
\begin{eqnarray}\label{d51}
Sf(x_2)=f(x^{\prime}_2)=f(x_2^{\prime}-2x_2),
\end{eqnarray}
\begin{eqnarray}
\label{a62}Sf(\alpha x_1+(1-\alpha)
x_2)=f(x_0)=f(x_0-2(\alpha x_1+(1-\alpha) x_2)).
\end{eqnarray}
 Since $f$ is a convex
function, we have
\begin{eqnarray}\label{a63}\alpha f(x_1^{\prime})+(1-\alpha)f(x_2^{\prime})\geq f(\alpha
x_1^{\prime}+(1-\alpha)x_2^{\prime}),\end{eqnarray}
\begin{eqnarray}\label{a64}&&\alpha f(x_1^{\prime}-2x_1)+(1-\alpha)f(x_2^{\prime}-2x_2)\nonumber\\
&\geq& f(\alpha
x_1^{\prime}+(1-\alpha)x_2^{\prime}-2(\alpha
x_1+(1-\alpha)x_2)).\end{eqnarray}

Since $f(x_0)=f(x_0-2(\alpha x_1+(1-\alpha) x_2))$ and both
$[x_0-2(\alpha x_1+(1-\alpha)x_2),x_0]$ and $[\alpha
x^{\prime}_1+(1-\alpha)x^{\prime}_2-2(\alpha
x_1+(1-\alpha)x_2),\alpha x^{\prime}_1+(1-\alpha)x^{\prime}_2]$ have
the same length $2(\alpha x_1+(1-\alpha)x_2)>0$, by Lemma
\ref{b14}(v), either
\begin{eqnarray}\label{a65}
f(\alpha x_1^{\prime}+(1-\alpha)x_2^{\prime})\geq
f(x_0)\end{eqnarray}or
\begin{eqnarray}\label{a66}
&&f(\alpha x_1^{\prime}+(1-\alpha)x_2^{\prime}-2(\alpha
x_1+(1-\alpha)x_2))\nonumber\\
&\geq& f(x_0-2(\alpha x_1+(1-\alpha)
x_2)).
\end{eqnarray}
If (\ref{a65}) holds, then we use (\ref{a63}) and if (\ref{a66})
holds, then we use (\ref{a64}), by (\ref{a61})-(\ref{a62}), $Sf$ is
a convex function.

Finally, we prove that $Vol_1 ([f\leq s])=Vol_1([Sf\leq s])$ for any
$s\in\mathbb{R}$. Since $Sf(x)$ is an even convex function,
$Sf(0)=\inf Sf$. Since $Sf(0)=\inf f$ by (\ref{c10}), thus $\inf
Sf=\inf f$. Let $a=\inf Sf=\inf f$, $(Sf)^{-1}(a)=[-\delta,\delta]$,
and $f^{-1}(a)=[\mu,\nu]$.

If $s=a$, then $Vol_1 ([f\leq s])=\nu-\mu$ and $Vol_1([Sf\leq
s])=2\delta$. Next, we prove $\nu-\mu=2\delta$. By Lemma \ref{b14},
$Sf$ is strictly decreasing on $(-\infty,-\delta)$ and strictly
increasing on $(\delta,+\infty)$, and $f$ is strictly decreasing on
$(-\infty,\mu)$ and strictly increasing on $(\nu,+\infty)$. For
$\delta\geq0$, if $\nu-\mu>2\delta$, then
$x_0:=\delta+\frac{\nu-\mu-2\delta}{2}>\delta$, thus
$Sf(x_0)>Sf(\delta)$, which is contradictory with
\begin{eqnarray}\label{c12}
Sf(x_0)&=&\sup_{\lambda\in[0,1]}\inf_{x_1\in\mathbb{R}}[\lambda
f(2x_1)+(1-\lambda)f(2x_1-2x_0)]\nonumber\\
&\leq&\sup_{\lambda\in [0,1]}[\lambda
f(\nu)+(1-\lambda)f(\nu-2x_0)]=a,
\end{eqnarray}
where inequality is by choosing $x_1=\frac{\nu}{2}$ and last
equality is by $\nu-2x_0=\mu$. Thus, $\nu-\mu\leq2\delta$. Thus if
$\delta=0$, then $\mu=\nu$. For $\delta>0$, by (\ref{26}), there
exists $\delta^{\prime}$ such that
$Sf(\delta)=f(\delta^{\prime})=f(\delta^{\prime}-2\delta)=a$, which
implies that $\nu-\mu\geq 2\delta$. Thus, $\nu-\mu= 2\delta$.

 If $s>a$, by Lemma
\ref{b14}, equality (\ref{26}), and $Sf$ is even, there is a unique
$x>0$ and a unique $x^{\prime}\in\mathbb{R}$ such that
$Sf(-x)=Sf(x)=s=f(x^{\prime})=f(x^{\prime}-2x)$, thus we have $Vol_1
([f\leq s])=Vol_1([Sf\leq s])=2x$.

 If $s<a$, then $[Sf\leq s]=[f\leq
s]=\emptyset$, thus $Vol_1 ([f\leq s])=Vol_1([Sf\leq s])=0$.\\

{\bf Case (2) ${\rm dom}f\neq \mathbb{R}$.} There exist eight cases
for ${\rm dom}f$: 1) $[\alpha,\beta]$; 2) $(\alpha,\beta)$; 3)
$(\alpha,\beta]$; 4) $[\alpha,\beta)$; 5) $(-\infty,\beta]$; 6)
$(-\infty,\beta)$; 7) $[\alpha,+\infty)$; 8) $(\alpha,+\infty)$. We
need only prove our conclusion for ${\rm dom}f=(\alpha,\beta)$. By
the same method we can prove our conclusion for other cases. For
${\rm dom}f=(\alpha,\beta)$, there exist three cases: (i) $f$ is
decreasing on $(\alpha,\beta)$; (ii) $f$ is increasing on
$(\alpha,\beta)$; (iii) $f$ is decreasing on $(\alpha,\gamma]$ and
increasing on $[\gamma,\beta)$ for some $\gamma\in(\alpha,\beta)$.
Cases (i) and (ii) are corresponding to the cases of
$\lim_{\gamma\rightarrow\beta,\gamma<\beta}\gamma$ and
$\lim_{\gamma\rightarrow\alpha,\gamma>\alpha}\gamma$ in case (iii),
respectively, thus we need only prove our conclusion for case (iii).

If
$\lim_{x\rightarrow\alpha,x>\alpha}f(x)=\lim_{x\rightarrow\beta,x<\beta}f(x)$,
following the proof of Case (1) (i.e., ${\rm dom}f=\mathbb{R}$), we
have that $Sf$ is convex on
$(-\frac{\beta-\alpha}{2},\frac{\beta-\alpha}{2})$ and
$Vol_1([Sf\leq s])=Vol_1([f\leq s])$ for any
$s<\lim_{x\rightarrow\alpha,x>\alpha}f(x)$.

If
$\lim_{x\rightarrow\alpha,x>\alpha}f(x)\neq\lim_{x\rightarrow\beta,x<\beta}f(x)$,
we may assume that
\begin{eqnarray}\label{c15}\lim_{x\rightarrow\alpha,x>\alpha}f(x)=b>\lim_{x\rightarrow\beta,x<\beta}f(x)=c.\end{eqnarray}
If $c=a=\inf f$, then $f$ is decreasing on $(\alpha, \beta)$. Thus
we may suppose that $c>a$. Let $\gamma\in(\alpha,\beta)$ satisfy
$f(\gamma)=c$. If $|x|<\frac{\beta-\gamma}{2}$, by the proof of Case
(1), there exists $x^{\prime}\in(\gamma,\beta)$ such that
$Sf(x)=f(x^{\prime})=f(x^{\prime}-2x)$.

{\bf Step 1.} We shall prove that for
$|x|\geq\frac{\beta-\gamma}{2}$ and $|x|<\frac{\beta-\alpha}{2}$,
\begin{eqnarray}\label{d35}
Sf(x)=f(\beta-2|x|).
\end{eqnarray}

 Since $Sf$ is even, we may assume
$\frac{\beta-\gamma}{2}\leq x<\frac{\beta-\alpha}{2}$. For any
$\lambda\in[0,1]$,
\begin{eqnarray}\label{d36}
&&\inf_{x_1\in\mathbb{R}}[\lambda
f(2x_1)+(1-\lambda)f(2x_1-2x)]\nonumber\\
&\leq&\lambda\lim_{t\rightarrow
\beta,t<\beta}f(t)+(1-\lambda)f(\beta-2x)\nonumber\\
&=&\lambda c+(1-\lambda)f(\beta-2x).
\end{eqnarray}
Since $\frac{\beta-\gamma}{2}\leq x<\frac{\beta-\alpha}{2}$, then
$\alpha<\beta-2x\leq\gamma$. Since $f$ is decreasing on
$(\alpha,\gamma]$, thus $f(\beta-2x)\geq f(\gamma)=c$. Thus, by
(\ref{d36}), we have
\begin{eqnarray}\label{c19}Sf(x)&=&\sup_{\lambda\in [0,1]}\inf_{x_1\in\mathbb{R}^n}[\lambda
f(2x_1)+(1-\lambda)f(2x_1-2x)]\nonumber\\
&\leq&\sup_{\lambda\in [0,1]}[\lambda
c+(1-\lambda)f(\beta-2x)]=f(\beta-2x).
\end{eqnarray}
On the other hand, we prove that $Sf(x)\geq f(\beta-2x)$. Since
${\rm dom}f=(\alpha,\beta)$ and $\inf_{x_1\in\mathbb{R}}[\lambda
f(x_1)+(1-\lambda)f(x_1-2x)]=\inf f$ for $\lambda=0$ or $\lambda=1$,
we have
\begin{eqnarray}\label{d9}
Sf(x)=\sup_{\lambda\in(0,1)}\inf_{x_1\in(\alpha+2x,\beta)}[\lambda
f(x_1)+(1-\lambda)f(x_1-2x)].
\end{eqnarray}

By $b>c>a$, if $f^{-1}(a)=[\mu,\nu]$, then
$\alpha<\mu\leq\nu<\beta$, thus $f$ is strictly decreasing on
$(\alpha,\mu]$ and strictly increasing on $[\nu,\beta)$.
\begin{cla}\label{d38}
For a fixed $\beta^{\prime}\in(\nu,\beta)\cap(\alpha+2x,\beta)$,
there exists $\delta>0$ such that function
\begin{eqnarray}\label{d11}
G_x(x_1):=\lambda f(x_1)+(1-\lambda)f(x_1-2x)
\end{eqnarray}
is decreasing on $(\alpha+2x,\beta^{\prime}]$ for any
$0<\lambda<\delta$.
\end{cla}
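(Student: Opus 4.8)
The plan is to reduce the claim to a sign computation for a one-sided derivative of $G_x$. Since $f$ is convex, for each fixed $\lambda\in(0,1)$ the map $x_1\mapsto G_x(x_1)=\lambda f(x_1)+(1-\lambda)f(x_1-2x)$ is convex on its effective domain $(\alpha+2x,\beta)$, with right derivative $(G_x)'_r(x_1)=\lambda f'_r(x_1)+(1-\lambda)f'_r(x_1-2x)$, which is non-decreasing in $x_1$. A convex function whose right derivative is strictly negative throughout an interval is strictly decreasing there, so it suffices to produce $\delta>0$ such that $(G_x)'_r(x_1)<0$ for every $x_1\in(\alpha+2x,\beta']$ whenever $0<\lambda<\delta$.

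The only genuinely geometric ingredient, and the step I expect to be the crux, is locating the translated argument $x_1-2x$. First I would observe that $\gamma\in(\alpha,\mu)$: indeed $f$ is strictly increasing on $[\nu,\beta)$ with limit $c$ at $\beta$, so $f<c$ on $[\nu,\beta)$, while $f\equiv a<c$ on $[\mu,\nu]$; hence the unique point with $f(\gamma)=c$ must lie on the strictly decreasing branch $(\alpha,\mu)$. Then for $x_1\in(\alpha+2x,\beta']$ the hypothesis $x\ge\frac{\beta-\gamma}{2}$ gives the chain $\alpha<x_1-2x\le\beta'-2x<\beta-2x\le\gamma<\mu$, so $x_1-2x$ lies in the open interval $(\alpha,\mu)$ on which $f$ is strictly decreasing. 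A convex function strictly decreasing on $(\alpha,\mu)$ has strictly negative right derivative there, so by monotonicity of $f'_r$ we obtain $f'_r(x_1-2x)\le f'_r(\beta'-2x)<0$; set $m:=f'_r(\beta'-2x)$. On the other hand $\beta'<\beta$ puts $\beta'$ in the interior of ${\rm dom}\,f$, so $K:=f'_r(\beta')$ is finite and $f'_r(x_1)\le K$ for every $x_1\le\beta'$.

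Combining these bounds, on $(\alpha+2x,\beta']$ we have $(G_x)'_r(x_1)\le\lambda K+(1-\lambda)m\le\lambda\max\{0,K\}+(1-\lambda)m$. Setting $\delta:=\frac{-m}{\max\{0,K\}-m}\in(0,1]$, any $\lambda$ with $0<\lambda<\delta$ satisfies $\lambda\max\{0,K\}<(1-\lambda)(-m)$, hence $(G_x)'_r(x_1)<0$ on $(\alpha+2x,\beta']$, and therefore $G_x$ is strictly decreasing there. The arithmetic in this last step is routine; what deserves care is the chain of inequalities keeping $x_1-2x$ strictly to the left of $\mu$, which is exactly where the hypothesis $x\ge\frac{\beta-\gamma}{2}$ and the location of $\gamma$ on the decreasing branch enter.
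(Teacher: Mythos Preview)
Your proposal is correct and follows essentially the same route as the paper: bound the right derivative $(G_x)'_r(x_1)\le\lambda f'_r(\beta')+(1-\lambda)f'_r(\beta'-2x)$ via monotonicity of $f'_r$, then choose $\delta$ so that this linear combination is negative. The only cosmetic differences are that you take extra care to justify $\gamma<\mu$ and you guard with $\max\{0,K\}$ in defining $\delta$; since in fact $\beta'\in(\nu,\beta)$ forces $K=f'_r(\beta')>0$, your $\delta$ coincides with the paper's choice $\delta=\dfrac{-f'_r(\beta'-2x)}{f'_r(\beta')-f'_r(\beta'-2x)}$.
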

\noindent{\it Proof of Claim \ref{d38}.} For $x_1\in
(\alpha+2x,\beta^{\prime}]$, the right derivative of $G_x(x_1)$
\begin{eqnarray}\label{c17}G_{xr}^{\prime}(x_1)
&=&\lambda
f_r^{\prime}(x_1)+(1-\lambda)f_r^{\prime}(x_1-2x)\nonumber\\
&\leq&\lambda
f_r^{\prime}(\beta^{\prime})+(1-\lambda)f_r^{\prime}(\beta^{\prime}-2x),\end{eqnarray}
where  the inequality is by the right derivative of a convex
function is increasing on the interior of its effective domain.
Since $\beta^{\prime}\in(\nu,\beta)\cap(\alpha+2x,\beta)$ and
$x\in[\frac{\beta-\gamma}{2},\frac{\beta-\alpha}{2})$, then
$\beta^{\prime}-2x\in(\alpha,\gamma+\beta^{\prime}-\beta)$, thus
$f_r^{\prime}(\beta^{\prime})>0$ and
$f_r^{\prime}(\beta^{\prime}-2x)<0$ for $f$ is strictly increasing
on $(\nu,\beta)$ and strictly decreasing on $(\alpha,\gamma]$. Thus,
by (\ref{c17}), we choose
\begin{eqnarray}\label{d12}
\delta=\frac{-f_r^{\prime}(\beta^{\prime}-2x)}{f_r^{\prime}(\beta^{\prime})-f_r^{\prime}(\beta^{\prime}-2x)},
\end{eqnarray}
then $G_{xr}^{\prime}(x_1)<0$ on $(\alpha+2x,\beta^{\prime}]$ for
any $\lambda\in(0,\delta)$. Therefore, $G_x(x_1)$ is decreasing on
$(\alpha+2x,\beta^{\prime}]$ for any $\lambda\in(0,\delta)$. \hfill
$\Box$\\
By (\ref{d9}) and Claim \ref{d38}, we have that
\begin{eqnarray}\label{d10}
Sf(x)&=&\sup_{\lambda\in(0,1)}\inf_{x_1\in(\alpha+2x,\beta)}[\lambda
f(x_1)+(1-\lambda)f(x_1-2x)]\nonumber\\
&\geq&\sup_{\lambda\in(0,\delta)}\inf_{x_1\in(\alpha+2x,\beta)}[\lambda
f(x_1)+(1-\lambda)f(x_1-2x)]\nonumber\\
&=&\sup_{\lambda\in(0,\delta)}\inf_{x_1\in[\beta^{\prime},\beta)}[\lambda
f(x_1)+(1-\lambda)f(x_1-2x)]\nonumber\\
&\geq&\sup_{\lambda\in(0,\delta)}[\lambda
f(\beta^{\prime})+(1-\lambda)f(\beta-2x)]\nonumber\\
&=&f(\beta-2x),
\end{eqnarray}
where the second inequality is by
$x_1\in[\beta^{\prime},\beta)\subset(\nu,\beta)$ and
$\beta^{\prime}-2x\leq x_1-2x<\beta-2x\leq\gamma$ and $f$ is
strictly increasing on $(\nu,\beta)$ and strictly decreasing on
$(\alpha,\gamma]$, and the last equality is by $f(\beta-2x)\geq
f(\beta^{\prime})$.

{\bf Step 2.} We shall prove that $Sf$ is convex in $\mathbb{R}$.
Since $Sf$ is increasing on $[0,\frac{\beta-\alpha}{2})$ and $Sf$ is
even on $(-\frac{\beta-\alpha}{2},\frac{\beta-\alpha}{2})$. Thus, it
suffices to prove $Sf$ is convex in $[0,\frac{\beta-\alpha}{2})$.
For any $x_1, x_2\in
[\frac{\beta-\gamma}{2},\frac{\beta-\alpha}{2})$ and $\lambda\in
(0,1)$, by (\ref{d35}) and $f$ is convex function, then
\begin{eqnarray}\label{c20}
\lambda Sf(x_1)+(1-\lambda)Sf(x_2)
&=&\lambda f(\beta-2x_1)+(1-\lambda)f(\beta-2x_2)\nonumber\\
&\geq&f(\beta-2(\lambda x_1+(1-\lambda)x_2))\nonumber\\
&=&Sf(\lambda x_1+(1-\lambda)x_2),
\end{eqnarray}
where the last equality is by $\lambda
x_1+(1-\lambda)x_2\in[\frac{\beta-\gamma}{2},\frac{\beta-\alpha}{2})$.
By (\ref{c20}), $Sf$ is convex on
$[\frac{\beta-\gamma}{2},\frac{\beta-\alpha}{2})$. Because that $Sf$
is convex in $[0,\frac{\beta-\gamma}{2}]$ by the proof in Case (1),
it suffices to prove that the left derivative of $Sf$ at
$x=\frac{\beta-\gamma}{2}$ is less than its right derivative at
$x=\frac{\beta-\gamma}{2}$.

By (\ref{d35}), we have
\begin{eqnarray}\label{c22}
Sf_r^{\prime}(\frac{\beta-\gamma}{2})&=&\lim_{t\rightarrow
0,t>0}\frac{Sf(\frac{\beta-\gamma}{2}+t)-Sf(\frac{\beta-\gamma}{2})}{t}\nonumber\\
&=&\lim_{t\rightarrow
0,t>0}\frac{f(\gamma-2t)-f(\gamma)}{t}=-2f_l^{\prime}(\gamma).
\end{eqnarray}
For any $t\in(-\frac{\beta-\gamma}{2},0)$, we have
$\frac{\beta-\gamma}{2}+t\in(0,\frac{\beta-\gamma}{2})$. Thus there
exist $x^{\prime},\;x^{\prime\prime}\in(\gamma,\beta)$ such that
$x^{\prime\prime}-x^{\prime}=2(\frac{\beta-\gamma}{2}+t)$ and
$Sf(\frac{\beta-\gamma}{2}+t)=f(x^{\prime})=f(x^{\prime\prime})$.
Since
\begin{eqnarray*}\label{d13}
(x^{\prime}-\gamma)+2(\frac{\beta-\gamma}{2}+t)=(x^{\prime}-\gamma)+(x^{\prime\prime}-x^{\prime})=x^{\prime\prime}-\gamma<\beta-\gamma,
\end{eqnarray*}
 $x^{\prime}<\gamma-2t$. Let $|t|$ be sufficiently small such
that $\gamma+2|t|<\mu$, where $\mu$ satisfies $f^{-1}(a)=[\mu,\nu]$,
then $f(x^{\prime})>f(\gamma-2t)$ for $f$ is strictly decreasing on
$(\gamma,\mu)$. Then
\begin{eqnarray}\label{c24}
Sf_l^{\prime}(\frac{\beta-\gamma}{2})&=&\lim_{t\rightarrow
0,t<0}\frac{Sf(\frac{\beta-\gamma}{2}+t)-Sf(\frac{\beta-\gamma}{2})}{t}
=\lim_{t\rightarrow
0,t<0}\frac{f(x^{\prime})-f(\gamma)}{t}\nonumber\\
&\leq&\lim_{t\rightarrow
0,t<0}\frac{f(\gamma-2t)-f(\gamma)}{t}=-2f_r^{\prime}(\gamma).
\end{eqnarray}
Since $f$ is a convex function, then $f_l^{\prime}(\gamma)\leq
f_r^{\prime}(\gamma)$, by (\ref{c22}) and (\ref{c24}), we have
$Sf_l^{\prime}(\frac{\beta-\gamma}{2})\leq
Sf_r^{\prime}(\frac{\beta-\gamma}{2})$.

{\bf Step 3.} Proof of $Vol_1([Sf\leq a])=Vol([f\leq s])$ for any
$s\in\mathbb{R}$.

If $s<c$, the proof is the same as in Case (1).

If $c\leq s<b$, since $f$ is strictly decreasing on
$(\alpha,\gamma)$, there is a unique $x^{\prime}\in(\alpha,\gamma)$
such that $f(x^{\prime})=s$, thus $[f<s]=[x^{\prime},\beta)$. By
(\ref{d35}), we have
$Sf(\frac{\beta-x^{\prime}}{2})=f(x^{\prime})=s$, thus
$[Sf<s]=[-\frac{\beta-x^{\prime}}{2},\frac{\beta-x^{\prime}}{2}]$.
Therefore, $Vol_1([Sf<s])=Vol_1([f<s])=\beta-x^{\prime}$.

If $s\geq b$, then $b<+\infty$ for $s\in\mathbb{R}$, we have
$Vol_1([Sf<s])=Vol_1([f<s])=\beta-\alpha$.\hfill $\Box$\\

\begin{Rem}{\bf 1)} By Theorem \ref{b10}, for any
$x\in\mathbb{R}$, if $x=0$, then $Sf(0)=\inf f$; if $x\neq0$, then
there exist three cases:
\begin{itemize}
\item[i)\;\;]$Sf(x)=f(x^{\prime})=f(x^{\prime}-2|x|)$
for some $x^{\prime}\in\mathbb{R}$;
\item[ii)\;] $Sf(x)=f(x_0-2|x|)$ for some $x_0\in\mathbb{R}$;
\item[iii)] $S_u f(x)=f(x_0+2|x|)$ for some $x_0\in\mathbb{R}$.
\end{itemize}

{\bf 2)} In Theorem \ref{b10}, there exist three cases for ${\rm
dom}Sf$: i) ${\rm dom}Sf=(-\delta,\delta)$; ii) ${\rm
dom}Sf=[-\delta,\delta]$; iii) ${\rm dom}Sf=\mathbb{R}$. ${\rm
dom}Sf=(-\delta,\delta)$ is corresponding to ${\rm
dom}f=(\alpha,\beta)$, ${\rm dom}f=(\alpha,\beta]$ or ${\rm
dom}f=[\alpha,\beta)$, where $\delta=\frac{\beta-\alpha}{2}$. ${\rm
dom}Sf=[-\delta,\delta]$ is corresponding to ${\rm
dom}f=[\alpha,\beta]$. ${\rm dom}Sf=\mathbb{R}$ is corresponding to
${\rm dom}f=(-\infty,\beta)$, ${\rm dom}f=(-\infty,\beta]$, ${\rm
dom}f=(\alpha,+\infty)$, ${\rm dom}f=[\alpha,+\infty)$ or ${\rm
dom}f=\mathbb{R}$. For a non-empty convex set $K\subset\mathbb{R}^n$
and a hyperplane $H=u^{\perp}$, where $u\in S^{n-1}$, the {\it
Steiner symmetrization} $S_H K$ of $K$ about $H$ is defined as
$$S_{H}K=\{x^{\prime}+\frac{1}{2}(t_1-t_2)u:\;x^{\prime}\in
P_H(K),\;t_i\in I_K(x^{\prime})\;{\rm for}\;i=1,2\},$$ where
$P_H(K)=\{x^{\prime}\in H:\;x^{\prime}+tu\in K\; {\rm
for\;some}\;t\in\mathbb{R}\}$ is the projection of $K$ onto $H$ and
$I_K(x^{\prime})=\{t\in\mathbb{R}:\;x^{\prime}+tu\in K\}$. By the
above definition and Definition \ref{15}, for coercive convex
function $f:\mathbb{R}^n\rightarrow\mathbb{R}\cup\{+\infty\}$ and
its Steiner symmetrization $S_u f$, we have
\begin{eqnarray}\label{c32}
{\rm dom}(S_{u^{\perp}} f)=S_{u^{\perp}} ({\rm dom}f).
\end{eqnarray}
We know that ${\rm dom}f$ is convex if $f$ is convex and the Steiner symmetrization of a non-empty convex set
 is still a convex set, thus by (\ref{c32}), ${\rm dom}(S_{u^{\perp}} f)$ is a convex set.

{\bf 3)} For a convex function
$f:\mathbb{R}^n\rightarrow\mathbb{R}\cup\{+\infty\}$, the {\it
epigraph} of $f$ is defined as ${\rm
epi}f:=\{(x,y)\in\mathbb{R}^{n+1}: x\in {\rm dom}f,\;y\geq f(x)\}$.
By the definition of epigraph and Theorem \ref{b10}, for
one-dimensional coercive convex function $f:\mathbb{R}\rightarrow
\mathbb{R}\cup\{+\infty\}$, we have ${\rm cl}({\rm
epi}Sf)=S_{e^{\perp}}({\rm cl}({\rm epi}f))$, where $e$ is a unit
vector along the $x$-axis and ${\rm cl} A$ denotes the closure of a
subset $A\subset\mathbb{R}^n$. Let $f:\mathbb{R}^n\rightarrow
\mathbb{R}\cup\{+\infty\}$ be a coercive and convex function and
$u\in S^{n-1}$. For any $x^{\prime}\in u^{\perp}$ and
$t\in\mathbb{R}$, if $\tilde{f}(t)=f(x^{\prime}+tu)$ is considered
as a one-dimensional function about $t$, then $S\tilde{f}(t)=S_u
f(x^{\prime}+tu)$. By Theorem \ref{b10}, ${\rm cl}({\rm
epi}(S\tilde{f}))=S_{e^{\perp}}({\rm cl}({\rm epi}\tilde{f}))$.
Since $x^{\prime}\in u^{\perp}$ is arbitrary, thus we have
\begin{eqnarray}\label{c28} {\rm cl}({\rm epi}(S_u f))=S_{\tilde{u}^{\perp}}({\rm cl}({\rm
epi}f)),\end{eqnarray}where
$\tilde{u}^{\perp}\subset\mathbb{R}^{n+1}$ denotes the hyperplane
through the origin and orthogonal to the unit vector
$\tilde{u}=(u,0)\in\mathbb{R}^{n+1}$.
\end{Rem}
Next, by Definition \ref{15} and Theorem \ref{b10},  we shall prove
five propositions which are corresponding to properties 1-5 in Table
1.

The following lemma is an obvious fact, and we omit its proof.
\begin{Lem1}\label{c37} For $f:\mathbb{R}^n\rightarrow\mathbb{R}\cup\{+\infty\}$, let $u\in S^{n-1}$ and $H=u^{\perp}$, if
\begin{itemize}
\item[i)\;\;]$f$ is symmetric with respect to hyperplane $H$, i.e., for any
$x^{\prime}\in H$ and $t\in\mathbb{R}$,
$f(x^{\prime}+tu)=f(x^{\prime}-tu)$;
\item[ii)\;] for any $x^{\prime}\in H$ and $t_1$, $t_2\in\mathbb{R}$, if $|t_1|\leq|t_2|$, then
$f(x^{\prime}+t_1 u)\leq f(x^{\prime}+t_2u)$;
\item[iii)]  $f$ is convex on half-space $H^{+}:=\{x^{\prime}+tu: x^{\prime}\in u^{\perp},t\geq
0\}$.
\end{itemize}
Then $f$ is a convex function on $\mathbb{R}^n$.
\end{Lem1}

\begin{pro}\label{b16}
If $f:\mathbb{R}^n\rightarrow \mathbb{R}\cup\{+\infty\}$ is a
coercive convex function and $u\in S^{n-1}$, then $S_uf$ is a
coercive convex function and symmetric about $u^{\perp}$.
\end{pro}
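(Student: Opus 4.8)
The plan is to reduce everything to the one--dimensional Theorem \ref{b10} by slicing along lines parallel to $u$. Fix $x'\in u^{\perp}$ and put $\tilde f_{x'}(t):=f(x'+tu)$. Comparing Definition \ref{15} with (\ref{c16}) — this is already noted in the Remark following Theorem \ref{b10} — one has $(S_uf)(x'+tu)=S\tilde f_{x'}(t)$. If $x'$ lies in the projection $P_{u^{\perp}}({\rm dom}\,f)$, then $\tilde f_{x'}$ is a one--dimensional coercive convex function (convexity is inherited under restriction to a line, and $|x'+tu|\to\infty$ as $|t|\to\infty$ forces coercivity), so Theorem \ref{b10} applies: $S\tilde f_{x'}$ is an even, convex, coercive function of $t$ with $S\tilde f_{x'}(0)=\inf\tilde f_{x'}$ and $Vol_1([\tilde f_{x'}\le s])=Vol_1([S\tilde f_{x'}\le s])$ for every $s$. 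If $x'\notin P_{u^{\perp}}({\rm dom}\,f)$ then $\tilde f_{x'}\equiv+\infty$, hence $(S_uf)(x'+tu)=+\infty$ for all $t$. In every case the vertical slice $t\mapsto(S_uf)(x'+tu)$ is an even convex function; since $S_uf$ is finite at every $x'\in P_{u^{\perp}}({\rm dom}\,f)$, it is not identically $+\infty$. Evenness of the slices is exactly hypothesis (i) of Lemma \ref{c37}, and an even convex function of one variable is nondecreasing in $|t|$, which is hypothesis (ii). By Lemma \ref{c37} it then remains only to prove that $S_uf$ is convex on the half--space $H^{+}=\{x'+tu:x'\in u^{\perp},\,t\ge0\}$; coercivity of $S_uf$ has to be argued separately.

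For coercivity, suppose for contradiction there are $M>0$ and points $z_k=x_k'+t_ku$ with $|z_k|\to\infty$ and $(S_uf)(z_k)\le M$. Since $S\tilde f_{x_k'}$ is even and convex it is nondecreasing in $|t|$, so $\inf\tilde f_{x_k'}=S\tilde f_{x_k'}(0)\le S\tilde f_{x_k'}(t_k)=(S_uf)(z_k)\le M$, whence $[\tilde f_{x_k'}\le M]\neq\emptyset$, and $[S\tilde f_{x_k'}\le M]\supseteq[-|t_k|,|t_k|]$. By the volume--preservation part of Theorem \ref{b10}, the interval $[\tilde f_{x_k'}\le M]$ therefore has length at least $2|t_k|$, so it contains a point $s_k$ with $|s_k|\ge|t_k|$. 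Then $y_k:=x_k'+s_ku$ satisfies $f(y_k)\le M$ while $|y_k|^2=|x_k'|^2+s_k^2\ge|x_k'|^2+t_k^2=|z_k|^2\to\infty$, contradicting the coercivity of $f$.

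The heart of the proof is convexity of $S_uf$ on $H^{+}$, which I would establish in imitation of the one--dimensional convexity argument carried out inside the proof of Theorem \ref{b10}. Take $p_i=x_i'+t_iu$ with $t_i\ge0$ ($i=1,2$), $\alpha\in(0,1)$, and set $p_0:=\alpha p_1+(1-\alpha)p_2=x_0'+t_0u$ with $x_0'=\alpha x_1'+(1-\alpha)x_2'$ and $t_0=\alpha t_1+(1-\alpha)t_2$; we may assume each $\tilde f_{x_i'}\not\equiv+\infty$. If $t_0=0$ then $t_1=t_2=0$, $(S_uf)(x_i')=\inf_t f(x_i'+tu)$, and averaging near--minimizers on the two slices together with convexity of $f$ gives $(S_uf)(x_0')\le\alpha(S_uf)(x_1')+(1-\alpha)(S_uf)(x_2')$. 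If $t_0>0$, then by Theorem \ref{b10} (and the cases recorded in the Remark after it), on the slice over $x_i'$ there is a pair $q_i^{\pm}=x_i'+\tau_i^{\pm}u$ with $\tau_i^{+}-\tau_i^{-}=2t_i$ and $f(q_i^{\pm})\le(S_uf)(p_i)$ realizing $(S_uf)(p_i)=S\tilde f_{x_i'}(t_i)$ (with equality $f(q_i^{+})=f(q_i^{-})=(S_uf)(p_i)$ in the generic case, one endpoint lying on the edge of ${\rm dom}\,\tilde f_{x_i'}$ in the boundary cases of Theorem \ref{b10}, Case (2)). Put $Q^{\pm}:=\alpha q_1^{\pm}+(1-\alpha)q_2^{\pm}$; these lie on the slice over $x_0'$, say $Q^{\pm}=x_0'+\sigma^{\pm}u$ with $\sigma^{+}-\sigma^{-}=2t_0$, and convexity of $f$ yields $f(Q^{\pm})\le\alpha f(q_1^{\pm})+(1-\alpha)f(q_2^{\pm})\le\alpha(S_uf)(p_1)+(1-\alpha)(S_uf)(p_2)$. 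On the slice over $x_0'$, Theorem \ref{b10} again furnishes a pair at heights $\rho^{\pm}$ with $\rho^{+}-\rho^{-}=2t_0$, equal $f$--values, realizing $(S_uf)(p_0)$; as $[\rho^{-},\rho^{+}]$ and $[\sigma^{-},\sigma^{+}]$ have the same length $2t_0>0$, Lemma \ref{b14}(v) applied to $\tilde f_{x_0'}$ gives $f(Q^{+})\ge(S_uf)(p_0)$ or $f(Q^{-})\ge(S_uf)(p_0)$. In either case $(S_uf)(p_0)\le\alpha(S_uf)(p_1)+(1-\alpha)(S_uf)(p_2)$, so $S_uf$ is convex on $H^{+}$, and Lemma \ref{c37} completes the proof. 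I expect the genuine obstacle to be the bookkeeping when ${\rm dom}\,f\neq\mathbb{R}^n$: the realizing pairs may degenerate onto relative boundary points of the effective domains of slices (the situation $Sf(x)=f(\beta-2|x|)$ in Theorem \ref{b10}), where the relevant $f$--values must be read off as limits, and one must check the above inequalities survive — a clean way around this is to invoke the epigraph identity ${\rm cl}({\rm epi}(S_uf))=S_{\tilde u^{\perp}}({\rm cl}({\rm epi}\,f))$ from the Remark after Theorem \ref{b10}: since ${\rm epi}\,f$ is convex and Steiner symmetrization of a convex set is again convex, ${\rm cl}({\rm epi}(S_uf))$ is convex, which together with the fiberwise convexity already in hand forces $S_uf$ to be convex.
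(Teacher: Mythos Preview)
Your overall strategy matches the paper's: slice along $u$, invoke Theorem~\ref{b10} on each slice, and use Lemma~\ref{c37} to reduce global convexity to convexity on $H^{+}$. Your coercivity argument via the level-set volume identity in Theorem~\ref{b10} is in fact cleaner than the paper's, which instead runs through the three cases of Remark~1) to manufacture the sequence $\{y_n\}$ by hand. The substantive divergence is in the final convexity step. You bound $(S_uf)(p_0)$ by invoking Lemma~\ref{b14}(v) on the slice $\tilde f_{x_0'}$, and for that you need a pair $\rho^{\pm}$ on that slice with $\rho^{+}-\rho^{-}=2t_0$ and \emph{equal} $f$-values realizing $(S_uf)(p_0)$. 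This is only the generic situation; in the boundary cases of Theorem~\ref{b10} (Case~(2), Step~1) no such equal-value pair exists, and Lemma~\ref{b14}(v) as stated (for $f:\mathbb{R}\to\mathbb{R}$) does not apply. Your proposed epigraph workaround gives only that ${\rm cl}({\rm epi}(S_uf))$ is convex, i.e.\ that the lower semicontinuous hull of $S_uf$ is convex. Since $S_uf$ need not be lower semicontinuous (already in dimension one: $f=0$ on $(-1,1)$, $f=1$ at $\pm1$, $f=+\infty$ outside), convexity of the closure together with fiberwise convexity does not by itself force convexity of $S_uf$; one can write down functions satisfying both hypotheses that are not convex (e.g.\ $g(x_1,x_2)=|x_2|$ for $x_1\neq0$, $g(0,x_2)=1+|x_2|$), so an extra argument specific to the $S_u$ construction would still be owed.

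The paper sidesteps this entirely by not appealing to Lemma~\ref{b14}(v) at $p_0$. Instead it bounds $(S_uf)(p_0)$ straight from Definition~\ref{15}: plugging $\omega=\sigma^{+}$ into the infimum yields
\[
(S_uf)(p_0)\ \le\ \sup_{\delta\in[0,1]}\bigl[\delta f(Q^{+})+(1-\delta)f(Q^{-})\bigr]\ =\ \max\bigl(f(Q^{+}),f(Q^{-})\bigr),
\]
valid regardless of which case the $x_0'$-slice falls into. All the boundary bookkeeping is then confined to the slices over $x_1'$ and $x_2'$ (the pairs $q_i^{\pm}$), where the paper makes exactly the ``read off one-sided limits'' move you anticipate (their assumption~(\ref{d23})). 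Replacing your Lemma~\ref{b14}(v) step by this one-line sup--inf bound closes the gap and completes your argument.
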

\begin{proof}
It is clear that $S_uf$ is symmetric about $u^{\perp}$. Indeed, for
any $x^{\prime}\in u^{\perp}$ and $t\in\mathbb{R}$, if we consider
$S_uf(x^{\prime}+tu)$ as a one-dimensional function about $t$, then
by Theorem \ref{b10} and Definition \ref{15}, we have
$S_uf(x^{\prime}+tu)=S_uf(x^{\prime}-tu)$.

{\bf Step 1.} We shall prove that $S_u f$ is coercive.

Suppose that there exist $M_0>0$ and a sequence
$\{x_n\}_{n=1}^{\infty}\subset\mathbb{R}^n$ satisfying that
$|x_n|>n$ and $S_u f(x_n)<M_0$. Next, we shall construct a sequence
$\{y_n\}$ satisfying $|y_n|>n$ and $f(y_n)<M_0$, which is
contradictory with $f$ is coercive.

For any positive integer $n\geq 1$, let $x_n=x_n^{\prime}+t_n u$ and
$x_n^{\prime}\in u^{\perp}$. There exist two cases of $t_n\neq 0$
and $t_n=0$.

(1) If $t_n\neq0$, then by Theorem \ref{b10}, there exist three
cases:

i)
$S_uf(x_n)=f(x_n^{\prime}+t_n^{\prime}u)=f(x_n^{\prime}+(t_n^{\prime}-2t_n)u)$
for some $t_n^{\prime}\in\mathbb{R}$;

ii) $S_u f(x_n)=f(x_n^{\prime}+(t_0-2t_n)u)$ for some
$t_0\in\mathbb{R}$;

iii) $S_u f(x_n)=f(x_n^{\prime}+(t_0+2t_n)u)$ for some
$t_0\in\mathbb{R}$.

For case i), since $|t_n^{\prime}|+|t_n^{\prime}-2t_n|\geq 2|t_n|$,
then either $|t_n^{\prime}|\geq |t_n|$ or
$|t_n^{\prime}-2t_n|\geq|t_n|$. If $|t_n^{\prime}|\geq |t_n|$, let
$y_n=x_n^{\prime}+t_n^{\prime}u$, then $S_uf(x_n)=f(y_n)$ and
$|y_n|=|x_n^{\prime}|+|t_n^{\prime}|\geq
|x_n^{\prime}|+|t_n|=|x_n|$. If $|t_n^{\prime}-2t_n|\geq|t_n|$, let
$y_n=x_n^{\prime}+(t_n^{\prime}-2t_n)u$, then $S_uf(x_n)=f(y_n)$ and
$|y_n|=|x_n^{\prime}|+|t_n^{\prime}-2t_n|\geq
|x_n^{\prime}|+|t_n|=|x_n|$. Since $|x_n|>n$, we have $|y_n|> n$ and
$f(y_n)=S_uf(x_n)<M_0$.

For case ii),  since $|t_0|+|t_0-2t_n|\geq 2|t_n|$, we have either
$|t_0|\geq |t_n|$ or $|t_0-2t_n|\geq|t_n|$. If
$|t_0-2t_n|\geq|t_n|$, let $y_n=x_n^{\prime}+(t_0-2t_n)u$, then
$S_uf(x_n)=f(y_n)$ and $|y_n|\geq|x_n|$. If $|t_0|\geq |t_n|$, let
$y_n=x_n^{\prime}+t_0u$ if $x_n^{\prime}+t_0u\in {\rm dom} f$,
otherwise let $y_n=x_n^{\prime}+t_0^{\prime}u$, where $t_0^{\prime}$
satisfies $x_n^{\prime}+t_0^{\prime}u\in {\rm dom}f$,
$|x_n^{\prime}+t_0^{\prime}u|>n$ and
$f(x_n^{\prime}+t_0^{\prime}u)<f(x_n^{\prime}+(t_0-2t_n)u)$, which
can be satisfied for $\lim_{t\rightarrow
t_0,\;t<t_0}f(x_n^{\prime}+t u)\leq f(x_n^{\prime}+(t_0-2t_n)u)$ by
Theorem \ref{b10}. Thus, we have $|y_n|> n$ and $f(y_n)< M_0$.

For case iii), we can construct $\{y_n\}$ with the same method as in
case (ii).

(2) If $t_n=0$, by Definition \ref{15}, we have
$Sf(x_n)=\inf_{t\in\mathbb{R}}f(x_n^{\prime}+tu)$. Since
$S_uf(x_n)<M_0$, there exists $y_n=x_n^{\prime}+t^{\prime}u$ such
that $f(y_n)<M_0$. Since $|y_n|=|x_n^{\prime}|+|t^{\prime}|\geq
|x_n^{\prime}|=|x_n|$, we have $|y_n|>n$ and $f(y_n)<M_0$.

{\bf Step 2.}  We shall prove that $S_uf$ is convex.

\begin{cla}\label{d48}
$S_uf$ is proper, i.e., $[S_uf=+\infty]\neq\mathbb{R}^n$ and
$[S_uf=-\infty]=\emptyset$.
\end{cla}
\noindent {\it Proof of Claim \ref{d48}.}  For any
$x\in\mathbb{R}^n$, let $x=x^{\prime}+tu$, where $x^{\prime}\in
u^{\perp}$. Since $f$ is a coercive convex function defined on
$\mathbb{R}^n$, one dimensional function $f(x^{\prime}+tu)$ about
$t\in\mathbb{R}$ either is a coercive convex function or is
identically $+\infty$. If $f(x^{\prime}+tu)$ is a coercive convex
function, then there exists $s\in\mathbb{R}$ such that
$s=\inf\{f(x^{\prime}+tu):t\in\mathbb{R}\}$. Thus, we have
$$Sf(x)=\sup_{\lambda\in[0,1]}\inf_{t_1+t_2=t}[\lambda
f(x^{\prime}+2t_1u)+(1-\lambda)f(x^{\prime}-2t_2u)]\geq s,$$
 which
implies that $Sf(x)>-\infty$. If $f(x^{\prime}+tu)$ is identically
$+\infty$, then $S_uf(x)=+\infty>-\infty$. By the definition of
convex functions, $f$ is not identically $+\infty$, there exists
$x\in\mathbb{R}^n$ such that $f(x)<+\infty$. Let $x=x_0+tu$, where
$x_0\in u^{\perp}$, then
$$S_uf(x_0)=\inf_{t_1\in\mathbb{R}}f(x_0+t_1u)\leq f(x)<+\infty,$$
which implies that $S_uf$ is not identically $+\infty$.\hfill
$\Box$\

By Definition {\ref{15}} and Theorem \ref{b10}, for any
$x^{\prime}\in u^{\perp}$, one-dimensional function $S_u
f(x^{\prime}+tu)$ is either an even and coercive convex function
about $t\in\mathbb{R}$ or identically $+\infty$. Thus, $S_uf$
satisfies conditions i) and ii) in Lemma \ref{c37}. Therefore, to
prove that $S_uf$ is convex, it suffices to prove that $S_uf$
satisfies condition iii) of Lemma \ref{c37}. For any $x$,
$y\in\{x^{\prime}+tu:x^{\prime}\in u^{\perp},t\geq0\}$ and
$\lambda\in (0,1)$, if $x\notin {\rm dom} (S_uf)$ or $y\notin {\rm
dom} (S_uf)$, then $S_uf(x)=+\infty$ or $S_uf(y)=+\infty$, thus
\begin{eqnarray}\label{d20}
S_uf(\lambda x+(1-\lambda)y)\leq \lambda S_uf(x)+(1-\lambda)S_uf(y).
\end{eqnarray}
 By Remark 2), ${\rm dom}(S_uf)$ is convex. Therefore, if $x\in {\rm dom}
(S_uf)$ and $y\in {\rm dom} (S_uf)$, then $\lambda x+(1-\lambda)y\in
{\rm dom}(S_uf)$. Let $x=x^{\prime}+tu$ and $y=y^{\prime}+su$, where
$x^{\prime}$, $y^{\prime}\in u^{\perp}$ and $t\geq0$ and $s\geq0$,
then $\lambda x+(1-\lambda)y=[\lambda
x^{\prime}+(1-\lambda)y^{\prime}]+[\lambda t+(1-\lambda)s]u$.

{\bf Case 3.1.} The case of $t=0$ and $s=0$. For the case we have
$x$, $y\in u^{\perp}$, thus $\lambda x+(1-\lambda)y\in u^{\perp}$.
By Definition \ref{15} and $f$ is convex, we have
\begin{eqnarray}&&\lambda
S_uf(x)+(1-\lambda)S_uf(y)\nonumber\\
&=&\lambda\inf_{t\in\mathbb{R}}f(x+tu)+(1-\lambda)\inf_{s\in\mathbb{R}}f(y+su)\nonumber\\
&=&\inf_{(t,s)\in\mathbb{R}^2}[\lambda f(x+tu)+(1-\lambda)f(y+su)]\nonumber\\
&\geq&\inf_{(t,s)\in\mathbb{R}^2}f(\lambda x+(1-\lambda)y+(\lambda t+(1-\lambda)s)u)\nonumber\\
&=&S_uf(\lambda x+(1-\lambda)y).
\end{eqnarray}

{\bf Case 3.2.} The case of $t>0$ and $s>0$.

For $x=x^{\prime}+tu\in{\rm dom}(S_uf)$, by Theorem \ref{b10}, there
exist three cases:

$a_1$) There exists some $t^{\prime}\in\mathbb{R}$ such that
\begin{eqnarray}\label{c43}S_uf(x)=f(x^{\prime}+t^{\prime}u)=f(x^{\prime}+(t^{\prime}-2t)u);\end{eqnarray}

$a_2$) There exists some $t_0\in\mathbb{R}$ such that
\begin{eqnarray}\label{c44}S_uf(x)=f(x^{\prime}+(t_0-2t)u)\geq\lim_{t_0^{\prime}\rightarrow t_0,t_0^{\prime}<t_0}f(x^{\prime}+t_0^{\prime}u);\end{eqnarray}

$a_3$) There exists some $t_0\in\mathbb{R}$ such that
\begin{eqnarray}\label{c45}S_uf(x)=f(x^{\prime}+(t_0+2t)u)\geq\lim_{t_0^{\prime}\rightarrow t_0,t_0^{\prime}>t_0}f(x^{\prime}+t_0^{\prime}u).\end{eqnarray}

For $y=y^{\prime}+su\in{\rm dom}(S_uf)$, by Theorem \ref{b10}, there
exist three cases:

$b_1$) There exists some $s^{\prime}\in\mathbb{R}$ such that
\begin{eqnarray}\label{c46}S_uf(y)=f(y^{\prime}+s^{\prime}u)=f(y^{\prime}+(s^{\prime}-2s)u);\end{eqnarray}

$b_2$) There exists some $s_0\in\mathbb{R}$ such that
\begin{eqnarray}\label{c47}S_uf(y)=f(y^{\prime}+(s_0-2s)u)\geq\lim_{s_0^{\prime}\rightarrow s_0, s_0^{\prime}<s_0}f(y^{\prime}+s_0^{\prime}u);\end{eqnarray}

$b_3$) There exists some $s_0\in\mathbb{R}$ such that
\begin{eqnarray}\label{c48}S_uf(y)=f(y^{\prime}+(s_0+2s)u)\geq\lim_{s_0^{\prime}\rightarrow s_0,
 s_0^{\prime}>s_0}f(y^{\prime}+s_0^{\prime}u).
 \end{eqnarray}

We may assume that
\begin{eqnarray}\label{d23}
&&f(x^{\prime}+t_0 u)=\lim_{t_0^{\prime}\rightarrow
t_0,t_0^{\prime}<t_0}f(x^{\prime}+t_0^{\prime}u)\;\;\;{\rm for\;\;
case}\;\; a_2),\nonumber\\
&&f(x^{\prime}+t_0 u)=\lim_{t_0^{\prime}\rightarrow
t_0,t_0^{\prime}>t_0}f(x^{\prime}+t_0^{\prime}u)\;\;\;{\rm for\;\;
case}\;\; a_3),\nonumber\\
&&f(y^{\prime}+s_0 u)=\lim_{s_0^{\prime}\rightarrow
s_0,s_0^{\prime}<s_0}f(y^{\prime}+s_0^{\prime}u)\;\;\;{\rm for\;\;
case}\;\; b_2),\nonumber\\
&&f(y^{\prime}+s_0 u)=\lim_{s_0^{\prime}\rightarrow
s_0,s_0^{\prime}>s_0}f(y^{\prime}+s_0^{\prime}u)\;\;\;{\rm for\;\;
case}\;\; b_3).
\end{eqnarray}

Let $(\tilde{t}_1,\tilde{t}_2)$ be a pair of real numbers satisfying
\begin{equation} \label{d24}
(\tilde{t}_1,\tilde{t}_2)=\left\{ \begin{aligned}
(t^{\prime}-2t,t^{\prime})\;\;\;{\rm for\;\;case}\;\; a_1)\\
(t_0-2t,t_0)\;\;\;{\rm for\;\;case}\;\; a_2)\\
(t_0,t_0+2t)\;\;\;{\rm for\;\; case}\;\; a_3).
\end{aligned} \right.
\end{equation}

Let $(\tilde{s}_1,\tilde{s}_2)$ be a pair of real numbers satisfying
\begin{equation} \label{d40}
(\tilde{s}_1,\tilde{s}_2)=\left\{\begin{aligned}
(s^{\prime}-2s,s^{\prime})\;\;\;{\rm for\;\;case}\;\; b_1)\\
(s_0-2s,s_0)\;\;\;{\rm for\;\;case}\;\; b_2)\\
(s_0,s_0+2s)\;\;\;{\rm for\;\; case}\;\; b_3).
\end{aligned} \right.
\end{equation}

Since $f$ is convex and by (\ref{c43}-\ref{c48}), for $i=1,2$, we
have
\begin{eqnarray}\label{d25}
&&\lambda S_u f(x)+(1-\lambda)S_u f(y)\nonumber\\
&\geq&\lambda f(x^{\prime}+\tilde{t}_iu)+(1-\lambda)
f(y^{\prime}+\tilde{s}_iu)\nonumber\\
&\geq&f(\lambda
x^{\prime}+(1-\lambda)y^{\prime}+(\lambda\tilde{t}_i+(1-\lambda)\tilde{s}_i)u).
\end{eqnarray}
By (\ref{d24}) and (\ref{d40}), we have
\begin{eqnarray}\label{d26}
&&[\lambda\tilde{t}_2+(1-\lambda)\tilde{s}_2]-[\lambda\tilde{t}_1+(1-\lambda)\tilde{s}_1]\nonumber\\
&=&\lambda
(\tilde{t}_2-\tilde{t}_1)+(1-\lambda)(\tilde{s}_2-\tilde{s}_1)=2[\lambda
t+(1-\lambda)s].
\end{eqnarray}

By $\lambda x+(1-\lambda)y=\lambda
x^{\prime}+(1-\lambda)y^{\prime}+(\lambda t+(1-\lambda)s)u$ and
Definition \ref{15}, we have
\begin{eqnarray}\label{d28}
&&S_uf(\lambda x+(1-\lambda)y)\nonumber\\
&=&\sup_{\delta\in[0,1]}\inf_{\omega\in\mathbb{R}}\left[\delta
f(\lambda x^{\prime}+(1-\lambda)y^{\prime}+\omega
u)\right.\nonumber\\
&&\left.+(1-\delta)f(\lambda
x^{\prime}+(1-\lambda)y^{\prime}+(\omega-2(\lambda
t+(1-\lambda)s)u)\right]\nonumber\\
&\leq&\sup_{\delta\in[0,1]}\left[\delta f(\lambda
x^{\prime}+(1-\lambda)y^{\prime}+(\lambda\tilde{t}_2+(1-\lambda)\tilde{s}_2)
u)\right.\nonumber\\
&&\left.+(1-\delta)f(\lambda x^{\prime}+(1-\lambda)
y^{\prime}+(\lambda\tilde{t}_1+(1-\lambda)\tilde{s}_1)u)\right]\nonumber\\
&\leq& \max_{i=1,2}f(\lambda
x^{\prime}+(1-\lambda)y^{\prime}+(\lambda\tilde{t}_i+(1-\lambda)\tilde{s}_i)u)\nonumber\\
&\leq&\lambda S_u f(x)+(1-\lambda)S_u f(y),
\end{eqnarray}
where the first inequality is by choosing
$\omega=\lambda\tilde{t}_2+(1-\lambda)\tilde{s}_2$ and (\ref{d26}),
and the last inequality is by (\ref{d25}).

{\bf Case 3.3.} The case of $t=0$ and $s>0$ (or  $t>0$ and $s=0$).
In this case, there exists $t_0$ such that
\begin{eqnarray}\label{d29}
S_uf(x)=\lim_{ t\rightarrow t_0,\;
 x+tu\in{\rm dom}f}f(x+tu).
\end{eqnarray}
We may assume that
\begin{eqnarray}\label{d41}
f(x+t_0u)=\lim_{t\rightarrow t_0,\; x+tu\in{\rm dom}f}f(x+tu).
\end{eqnarray}
In the proof of Case 3.2, let $\tilde{t}_1=\tilde{t}_2=t_0$, we can
get the required inequality.
\end{proof}

\begin{pro} \label{36}Let $f:\mathbb{R}^n\rightarrow \mathbb{R}\cup\{+\infty\}$ be a coercive convex
function and $u\in S^{n-1}$, then
\begin{eqnarray}\int_{\mathbb{R}^n}e^{-(S_u f)(x)}dx= \int_{\mathbb{R}^n}e^{-f(x)}dx.\end{eqnarray}
\end{pro}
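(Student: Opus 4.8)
The plan is to reduce the $n$-dimensional integral to a one-dimensional question along each line parallel to $u$, and then invoke the volume-preservation of sublevel sets established in Theorem \ref{b10}. Write points of $\mathbb{R}^n$ as $x = x' + tu$ with $x' \in u^\perp$ and $t \in \mathbb{R}$, and apply Fubini's theorem to both integrals: for a fixed $x' \in u^\perp$, consider the one-dimensional slice $\tilde f(t) := f(x'+tu)$, which by the discussion in Remark 3) (item following Theorem \ref{b10}) is either identically $+\infty$ or a coercive convex function of $t$, and moreover satisfies $S\tilde f(t) = S_u f(x'+tu)$. The integrals become
\begin{eqnarray*}
\int_{\mathbb{R}^n} e^{-f} = \int_{u^\perp}\!\left(\int_{\mathbb{R}} e^{-\tilde f(t)}\,dt\right) dx', \qquad \int_{\mathbb{R}^n} e^{-S_u f} = \int_{u^\perp}\!\left(\int_{\mathbb{R}} e^{-S\tilde f(t)}\,dt\right) dx',
\end{eqnarray*}
so it suffices to show $\int_{\mathbb{R}} e^{-\tilde f} = \int_{\mathbb{R}} e^{-S\tilde f}$ for every one-dimensional coercive convex $\tilde f$ (the case $\tilde f \equiv +\infty$ giving $0 = 0$ since then $S\tilde f \equiv +\infty$ as well, by Definition \ref{15}).

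For the one-dimensional identity, the key step is the layer-cake (Cavalieri) representation: for any measurable $g:\mathbb{R}\to\mathbb{R}\cup\{+\infty\}$ bounded below, $\int_{\mathbb{R}} e^{-g(t)}\,dt = \int_0^\infty \mathrm{Vol}_1(\{t : e^{-g(t)} > r\})\,dr$, and the set $\{e^{-g} > r\}$ equals $\{g < -\log r\} = \{g < s\}$ under $s = -\log r$. Changing variables, $\int_{\mathbb{R}} e^{-g} = \int_{-\infty}^{\infty} e^{-s}\,\mathrm{Vol}_1(\{g < s\})\,ds$ (for $g$ coercive convex, $\mathrm{Vol}_1(\{g<s\})$ is finite for every $s$ and differs from $\mathrm{Vol}_1(\{g \le s\})$ on at most a countable set of $s$, hence they agree as $L^1$ integrands). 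By Theorem \ref{b10}, $\mathrm{Vol}_1(\{\tilde f \le s\}) = \mathrm{Vol}_1(\{S\tilde f \le s\})$ for every $s \in \mathbb{R}$, so the two weighted integrals coincide, giving $\int_{\mathbb{R}} e^{-\tilde f} = \int_{\mathbb{R}} e^{-S\tilde f}$.

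I expect the only genuine subtlety — and hence the main obstacle to write carefully — to be the measurability and finiteness bookkeeping needed to legitimize Fubini's theorem: one must check that $(x',t)\mapsto S_u f(x'+tu)$ is measurable on $\mathbb{R}^n$ (which follows because it is a coercive convex function by Proposition \ref{b16}, and convex functions are measurable, being continuous on the interior of their domain), and that the slice integrals $x' \mapsto \int_{\mathbb{R}} e^{-\tilde f(t)}\,dt$ are measurable and the double integral is well-defined in $[0,+\infty]$, so Tonelli applies without integrability hypotheses since the integrand $e^{-f}$ is nonnegative. Once Tonelli is invoked the argument is a clean two-step reduction: fiberwise slicing plus the equimeasurability of sublevel sets from Theorem \ref{b10}. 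I would also remark that by iterating over a basis this yields property 2 of Table 1 for the $n$-fold symmetrization, and that the same layer-cake argument shows more generally $\int_{\mathbb{R}^n}\varphi(S_u f) = \int_{\mathbb{R}^n}\varphi(f)$ for any monotone $\varphi$, but for the proposition at hand the displayed identity with $\varphi = \exp(-\cdot)$ is all that is needed.
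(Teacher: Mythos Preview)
Your proof is correct and follows essentially the same strategy as the paper: layer-cake representation plus the equimeasurability of sublevel sets from Theorem~\ref{b10}. The only organizational difference is that the paper applies the layer-cake formula directly in $\mathbb{R}^n$---using the epigraph identity (\ref{c28}) to get $\mathrm{Vol}_n([S_uf<t])=\mathrm{Vol}_n([f<t])$ from the volume-preservation of geometric Steiner symmetrization---whereas you first slice along $u$ via Tonelli and then apply the one-dimensional layer-cake on each fiber; both orderings of the double integration give the same result.
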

\begin{proof}
By (\ref{c28}), for any $t\in\mathbb{R}$, we have
 ${\rm cl}[S_uf<t]=S_u ({\rm cl}[f<t])$.
Since Steiner symmetrization of convex sets preserves volume,
$Vol([S_uf<t])=Vol([f<t])$. By Fubini's theorem, we have
\begin{eqnarray}\label{c90}
\int_{\mathbb{R}^n}e^{-(S_u
f)(x)}dx&=&\int_{\mathbb{R}}Vol([S_uf<t])e^{-t}dt\nonumber\\
&=&\int_{\mathbb{R}}Vol([f<t])e^{-t}dt=\int_{\mathbb{R}^n}e^{-f(x)}dx.
\end{eqnarray}
\end{proof}

\begin{Lem1}\label{38}
Let $u_1,u_2\in S^{n-1}$ and $\langle u_1, u_2\rangle=0$. If
$f:\mathbb{R}^n\rightarrow \mathbb{R}\cup\{+\infty\}$ is a coercive
convex function and $f$ is symmetric about $u_1^{\perp}$, then
$S_{u_2} f$ is symmetric about both $u_1^{\perp}$ and $u_2^{\perp}$.
\end{Lem1}
\begin{proof}
By Proposition \ref{b16}, $S_{u_2}f$ is symmetric about
$u_2^{\perp}$. Next, we prove that $S_{u_2} f$ is symmetric about
$u_1^{\perp}$. Since $\langle u_1,u_2\rangle=0$, then $u_1\in
u_2^{\perp}$ and $u_2\in u_1^{\perp}$.  For any $x^{\prime}\in
u_1^{\perp}$, let $x^{\prime}=x^{\prime\prime}+t_{x^{\prime}}u_2$,
where $x^{\prime\prime}=x^{\prime}|u_2^{\perp}$. Then
$x^{\prime\prime}=x^{\prime}-t_{x^{\prime}}u_2\in u_1^{\perp}$, thus
$x^{\prime\prime}+tu_2\in u_1^{\perp}$. Because that
$x^{\prime\prime}\in u_2^{\perp}$ and $u_1\in u_2^{\perp}$, thus
$x^{\prime\prime}+tu_1\in u_2^{\perp}$. Thus, for any $x^{\prime}\in
u_1^{\perp}$ and $t\in\mathbb{R}$, we have
\begin{eqnarray}\label{39}&~&(S_{u_2}f)(x^{\prime}+tu_1)
=(S_{u_2}f)(x^{\prime\prime}+tu_1+t_{x^{\prime}}u_2)
\nonumber\\
&=&\sup_{\lambda\in[0,1]}\inf_{t_1+t_2=t_{x^{\prime}}}[\lambda f(x^{\prime\prime}+tu_1+2t_1u_2)+(1-\lambda)f(x^{\prime\prime}+tu_1-2t_2u_2)]
\nonumber\\
&=&\sup_{\lambda\in[0,1]}\inf_{t_1+t_2=t_{x^{\prime}}}[\lambda f(x^{\prime\prime}-tu_1+2t_1u_2)+(1-\lambda)f(x^{\prime\prime}-tu_1-2t_2u_2)]
\nonumber\\
&=&(S_{u_2}f)(x^{\prime\prime}-tu_1+t_{x^{\prime}}u_2)\nonumber\\
&=&(S_{u_2}f)(x^{\prime}-tu_1),
\end{eqnarray}
where the second equality is by $f$ is symmetric about $u_1^{\perp}$
and $x^{\prime\prime}+tu_2\in u_1^{\perp}$. This completes the
proof.
\end{proof}
We say that a function $f:
\mathbb{R}^n\mapsto\mathbb{R}\cup\{+\infty\}$ is {\it unconditional}
if $f(x_1,\dots, x_n)=f(|x_1|,\dots,|x_n|)$ for every
$(x_1,\dots,x_n)\in\mathbb{R}^n$.
\begin{pro}\label{40}
 Any
coercive convex function $f:\mathbb{R}^n\rightarrow
\mathbb{R}\cup\{+\infty\}$ can be transformed into an unconditional
function $\bar{f}$ using $n$ Steiner symmetrizations.
\end{pro}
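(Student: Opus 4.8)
The plan is to symmetrize successively with respect to the $n$ coordinate hyperplanes and to show, by induction, that each such symmetrization creates one new reflection symmetry while preserving all the previously created ones. Concretely, let $e_1,\dots,e_n$ denote the standard orthonormal basis of $\mathbb{R}^n$, set $f_0=f$ and $f_k=S_{e_k}f_{k-1}$ for $k=1,\dots,n$, and let $\bar f=f_n$. By Proposition \ref{b16}, each $f_k$ is again a coercive convex function and is symmetric about $e_k^{\perp}$. I claim that $\bar f$ is unconditional, which will prove the proposition since it has been produced from $f$ by the $n$ Steiner symmetrizations $S_{e_1},\dots,S_{e_n}$.

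The inductive claim is that $f_k$ is symmetric about every hyperplane $e_j^{\perp}$ with $1\le j\le k$. The base case $k=1$ is exactly Proposition \ref{b16}. For the inductive step, suppose $f_{k-1}$ is symmetric about $e_1^{\perp},\dots,e_{k-1}^{\perp}$. Symmetry of $f_k=S_{e_k}f_{k-1}$ about $e_k^{\perp}$ is Proposition \ref{b16}, so it remains to show that $f_k$ is still symmetric about $e_j^{\perp}$ for each fixed $j<k$. This is the natural extension of Lemma \ref{38} (which treats the case $k=2$), and the argument is the same: let $R_j$ be the reflection $x\mapsto x-2\langle x,e_j\rangle e_j$ about $e_j^{\perp}$; since $e_j\perp e_k$, $R_j$ fixes $e_k$ and maps $e_k^{\perp}$ onto itself. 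Writing an arbitrary point as $z=z'+te_k$ with $z'\in e_k^{\perp}$, Definition \ref{15} gives
\[(S_{e_k}f_{k-1})(R_jz)=\sup_{\lambda\in[0,1]}\inf_{t_1+t_2=t}\bigl[\lambda f_{k-1}(R_j(z'+2t_1e_k))+(1-\lambda)f_{k-1}(R_j(z'-2t_2e_k))\bigr],\]
using $R_je_k=e_k$ and $R_jz'\in e_k^{\perp}$; applying the symmetry of $f_{k-1}$ about $e_j^{\perp}$ to each term shows this equals $(S_{e_k}f_{k-1})(z)$. Hence $f_k$ is symmetric about $e_j^{\perp}$, completing the induction.

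Taking $k=n$, the function $\bar f=f_n$ is a coercive convex function symmetric about all $n$ coordinate hyperplanes $e_1^{\perp},\dots,e_n^{\perp}$. Composing the corresponding reflections shows $\bar f(x_1,\dots,x_n)=\bar f(\varepsilon_1 x_1,\dots,\varepsilon_n x_n)$ for every choice of signs $\varepsilon_i\in\{-1,1\}$, and in particular $\bar f(x_1,\dots,x_n)=\bar f(|x_1|,\dots,|x_n|)$; that is, $\bar f$ is unconditional.

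The only point requiring care — and the step I expect to be the main obstacle — is the generalized version of Lemma \ref{38}: one must verify that symmetrizing in the direction $e_k$ preserves all the earlier symmetries about $e_j^{\perp}$ with $j<k$ simultaneously, not merely a single one. The key observation that makes this routine is that every such reflection commutes with $S_{e_k}$ because each $e_j$ is orthogonal to $e_k$, so that $S_{e_k}$ only alters the $e_k$-coordinate while $R_j$ only alters the $e_j$-coordinate; after this observation the verification is a direct repetition of the computation in the proof of Lemma \ref{38}.
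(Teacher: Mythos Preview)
Your proof is correct and is essentially the same approach as the paper's: take an orthonormal basis, symmetrize successively, and use Proposition~\ref{b16} together with Lemma~\ref{38} to check by induction that all earlier reflection symmetries are preserved. One small simplification: you do not need a ``generalized'' Lemma~\ref{38}; the lemma as stated already applies separately for each fixed $j<k$ (take $u_1=e_j$, $u_2=e_k$), so the preservation of all prior symmetries follows immediately without reproving the computation.
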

\begin{proof}
Let $\{u_1,\dots, u_n\}$ be an orthonormal basis of $\mathbb{R}^n$.
By Proposition \ref{b16} and Lemma \ref{38}, $S_{u_n}\cdots S_{u_1}
f$ is symmetric about $u_i^{\bot}$, $i=1,\cdots, n$, which implies
that $f$ can be transformed into an unconditional function
$\bar{f}=S_{u_n}\cdots S_{u_1} f$ using $n$ Steiner symmetrizations.
\end{proof}
\begin{pro}\label{b18}
Let $f_1:\mathbb{R}^n\rightarrow \mathbb{R}\cup\{+\infty\}$ and
$f_2:\mathbb{R}^n\rightarrow \mathbb{R}\cup\{+\infty\}$ be coercive convex
functions and $u\in S^{n-1}$. If $f_1\leq f_2$ (which implies that
$f_1(x)\leq f_2(x)$ for any $x\in\mathbb{R}^n$), then $S_u f_1\leq
S_u f_2$.
\end{pro}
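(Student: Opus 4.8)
The plan is to read the inequality directly off Definition \ref{15}, using only the elementary fact that both $\inf$ and $\sup$ are monotone operations with respect to pointwise order. No convexity, coercivity, or the structural results of Theorem \ref{b10} are actually needed here; the monotonicity is purely formal.

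\textbf{Step 1: Fix coordinates and a point.} Let $x \in \mathbb{R}^n$ be arbitrary and write $x = x' + tu$ with $x' \in u^{\perp}$ and $t \in \mathbb{R}$. For $\lambda \in [0,1]$ and reals $t_1, t_2$ with $t_1 + t_2 = t$, I would compare the two bracketed expressions
$$
g_i(\lambda, t_1, t_2) := \lambda\, f_i(x' + 2t_1 u) + (1-\lambda)\, f_i(x' - 2t_2 u), \qquad i = 1, 2.
$$
Since $f_1 \leq f_2$ pointwise and $\lambda, 1-\lambda \geq 0$, we get $g_1(\lambda, t_1, t_2) \leq g_2(\lambda, t_1, t_2)$ for every admissible triple $(\lambda, t_1, t_2)$.

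\textbf{Step 2: Pass to the infimum.} For fixed $\lambda$, the inequality $g_1(\lambda, t_1, t_2) \leq g_2(\lambda, t_1, t_2)$ holds for all $(t_1, t_2)$ with $t_1 + t_2 = t$; hence $\inf_{t_1+t_2=t} g_1(\lambda, t_1, t_2)$ is a lower bound for the family $\{g_2(\lambda, t_1, t_2)\}$, so
$$
\inf_{t_1+t_2=t} g_1(\lambda, t_1, t_2) \;\leq\; \inf_{t_1+t_2=t} g_2(\lambda, t_1, t_2).
$$

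\textbf{Step 3: Pass to the supremum.} The last display holds for every $\lambda \in [0,1]$, and the right-hand side is bounded above by $\sup_{\lambda \in [0,1]} \inf_{t_1+t_2=t} g_2(\lambda, t_1, t_2) = (S_u f_2)(x)$; thus $(S_u f_2)(x)$ is an upper bound for the left-hand side as $\lambda$ ranges over $[0,1]$, giving $(S_u f_1)(x) \leq (S_u f_2)(x)$. Since $x$ was arbitrary, $S_u f_1 \leq S_u f_2$. There is essentially no obstacle here; the only minor point to keep in mind is to handle the $+\infty$ values correctly, but the ordered arithmetic of $\mathbb{R} \cup \{+\infty\}$ makes Steps 1--3 valid verbatim even when some of the $f_i$ values are $+\infty$.
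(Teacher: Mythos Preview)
Your proof is correct and follows essentially the same approach as the paper's own proof: both read the inequality directly off Definition \ref{15} by the monotonicity of $\inf$ and $\sup$ applied to a pointwise inequality. The paper simply compresses your Steps~1--3 into a single displayed chain of inequalities, while you spell out the routine justification for passing through the $\inf$ and then the $\sup$.
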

\begin{proof}
By Definition \ref{15} and $f_1\leq f_2$, for $x=x^{\prime}+tu$,
where $x^{\prime}\in u^{\perp}$, we have
\begin{eqnarray}\label{b20}
S_u f_1(x)&=&\sup_{\lambda\in
[0,1]}\inf_{t_1+t_2=t}[\lambda f_1(x^{\prime}+2t_1u)+(1-\lambda) f_1(x^{\prime}-2t_2u)]
\nonumber\\
&\leq&\sup_{\lambda\in [0,1]}\inf_{t_1+t_2=t}[\lambda
f_2(x^{\prime}+2t_1u)+(1-\lambda)f_2(x^{\prime}-2t_2u)]\nonumber\\
&=&S_u f_2(x).
\end{eqnarray}
\end{proof}
We say a function $f$ is even about point $z\in\mathbb{R}^n$ if
$f(z+x)=f(z-x)$ for any $x\in\mathbb{R}^n$. Let $z|H$ denote the
projection of $z$ onto hyperplane $H$.
\begin{pro}\label{33}
Let $f:\mathbb{R}^n\rightarrow \mathbb{R}\cup\{+\infty\}$ be a
coercive convex function and $u\in S^{n-1}$, if $f$ is even about
$z$, then $S_u f$ is even about $z|u^{\perp}$.
\end{pro}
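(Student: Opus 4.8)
The plan is to derive the stated identity directly from Definition~\ref{15} by a single change of variables in the inner infimum; Proposition~\ref{b16} is not even needed (though one could alternatively combine the symmetry of $S_uf$ about $u^{\perp}$ with evenness inside each slice). Let $c\in\mathbb{R}$ be the component of $z$ in the direction $u$, so that $z=z'+cu$ where $z':=z|u^{\perp}\in u^{\perp}$. Saying that $S_uf$ is even about $z'$ means $S_uf(z'+w)=S_uf(z'-w)$ for every $w\in\mathbb{R}^n$; writing $w=w'+tu$ with $w'\in u^{\perp}$ and $t\in\mathbb{R}$, the goal becomes the equality $S_uf\bigl((z'+w')+tu\bigr)=S_uf\bigl((z'-w')-tu\bigr)$.

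First I would expand the left side using (\ref{3}):
\[
S_uf\bigl((z'+w')+tu\bigr)=\sup_{\lambda\in[0,1]}\inf_{t_1+t_2=t}\bigl[\lambda f(z'+w'+2t_1u)+(1-\lambda)f(z'+w'-2t_2u)\bigr].
\]
The key step is to rewrite each of the two summands by reflecting about $z=z'+cu$: since $z'+w'+2t_1u=z+\bigl(w'+(2t_1-c)u\bigr)$, evenness of $f$ about $z$ gives $f(z'+w'+2t_1u)=f\bigl(z'-w'+2(c-t_1)u\bigr)$, and similarly $f(z'+w'-2t_2u)=f\bigl(z'-w'+2(c+t_2)u\bigr)$. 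Now substitute $s_1:=c-t_1$ and $s_2:=-(c+t_2)$; this is a bijection of the affine line $\{t_1+t_2=t\}$ onto $\{s_1+s_2=-t\}$, and it turns the bracket into $\lambda f(z'-w'+2s_1u)+(1-\lambda)f(z'-w'-2s_2u)$. Hence the $\sup$--$\inf$ above equals $\sup_{\lambda\in[0,1]}\inf_{s_1+s_2=-t}\bigl[\lambda f(z'-w'+2s_1u)+(1-\lambda)f(z'-w'-2s_2u)\bigr]$, which by Definition~\ref{15} applied at the point $(z'-w')+(-t)u$ is exactly $S_uf\bigl((z'-w')-tu\bigr)=S_uf(z'-w)$, as desired.

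I expect the only point requiring care to be the bookkeeping of this substitution, and in particular the observation that reflecting through $z$ reverses the sign of the infimum's constraint, from $t_1+t_2=t$ to $s_1+s_2=-t$ --- it is precisely this sign reversal that converts the formula evaluated at $z'+w$ into the formula evaluated at $z'-w$. Because the whole argument is a formal manipulation of the expression in (\ref{3}), it needs no case distinction and remains valid even when some values of $f$ or $S_uf$ equal $+\infty$; the only structural input is that the symmetry center $z$ has projection $z'$ onto $u^{\perp}$, which is already encoded in the statement.
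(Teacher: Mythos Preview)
Your proof is correct and follows essentially the same route as the paper: both arguments unwind Definition~\ref{15} and push the reflection about $z$ through the $\sup$--$\inf$ via a change of variables in the inner infimum. The only difference is organizational: the paper computes $(S_uf)(z'+x)$ and $(S_uf)(z'-x)$ separately, invoking the symmetry of $S_uf$ about $u^{\perp}$ from Proposition~\ref{b16} to flip the sign of $t$ before matching the two expressions, whereas you carry out a single substitution $(t_1,t_2)\mapsto(s_1,s_2)=(c-t_1,-(c+t_2))$ that simultaneously reflects $w'\mapsto -w'$ and $t\mapsto -t$, so Proposition~\ref{b16} is indeed not needed. This is a modest streamlining rather than a different idea.
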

\begin{proof}
For any $x\in\mathbb{R}^n$, let $x=x^{\prime}+tu$, where
$x^{\prime}=x|u^{\perp}$. Let $z=z^{\prime}-t_0u$, where
$z^{\prime}=z|u^{\perp}$. By Definition \ref{15}, we have
\begin{eqnarray}\label{d30}
&~&(S_u f)(z^{\prime}+x)=(S_u f)(z^{\prime}+x^{\prime}+tu)=(S_u f)(z^{\prime}+x^{\prime}-tu)\nonumber\\
&=&\sup_{\lambda\in[0,1]}\inf_{t_1+t_2=-t}[\lambda
f(z^{\prime}+x^{\prime}+2t_1u)+(1-\lambda)f(z^{\prime}+x^{\prime}-2t_2u)]
\nonumber\\
&=&\sup_{\lambda\in[0,1]}\inf_{t_2\in\mathbb{R}}[\lambda
f(z+t_0u+x^{\prime}-2t_2u-2tu)+(1-\lambda)f(z+t_0u+x^{\prime}-2t_2u)]
\nonumber\\
&=&\sup_{\lambda\in[0,1]}\inf_{t_2\in\mathbb{R}}[\lambda
f(z+x^{\prime}-2t_2u-2tu)+(1-\lambda)f(z+x^{\prime}-2t_2u)]
\nonumber\\
&=&\sup_{\lambda^{\prime}\in[0,1]}\inf_{t_2\in\mathbb{R}}[\lambda^{\prime}
f(z+x^{\prime}-2t_2u)+(1-\lambda^{\prime})f(z+x^{\prime}-2t_2u-2tu)],
\end{eqnarray}
where the second equality is by $S_uf$ is symmetric about
$u^{\perp}$ and the fifth equality is by replacing $t_0-2t_2$ by
$-2t_2$.

On the other hand, since $f$ is even about $z$, we have
\begin{eqnarray}\label{d43}
&~&(S_u f)(z^{\prime}-x)=(S_u f)(z^{\prime}-x^{\prime}-tu)\nonumber\\
&=&\sup_{\lambda\in[0,1]}\inf_{t_1+t_2=-t}[\lambda
f(z^{\prime}-x^{\prime}+2t_1u)+(1-\lambda)f(z^{\prime}-x^{\prime}-2t_2u)]
\nonumber\\
&=&\sup_{\lambda\in[0,1]}\inf_{t_1\in\mathbb{R}}[\lambda
f(z+t_0u-x^{\prime}+2t_1u)+(1-\lambda)f(z+t_0u-x^{\prime}+2t_1u+2tu)]
\nonumber\\
&=&\sup_{\lambda\in[0,1]}\inf_{t_1\in\mathbb{R}}[\lambda
f(z-x^{\prime}+2t_1u)+(1-\lambda)f(z-x^{\prime}+2t_1u+2tu)]
\nonumber\\
&=&\sup_{\lambda\in[0,1]}\inf_{t_1\in\mathbb{R}}[\lambda
f(z+x^{\prime}-2t_1u)+(1-\lambda)f(z+x^{\prime}-2t_1u-2tu)],
\end{eqnarray}
where the last equality is by $f$ is even about $z$. By (\ref{d30})
and (\ref{d43}), we have $(S_uf)(z^{\prime}+x)=(S_uf)(z^{\prime}-x)$
for any $x\in\mathbb{R}^n$.
\end{proof}
\section{The relation between new definition and former definitions}

\subsection{ The relation between Definition \ref{15} and Definition
\ref{c26}.}\

The relation can be generalized as follows:

(i) $S_uf$ is in general larger than $\widetilde{S}_u f$ (look at
Example \ref{c63}).

(ii) For one-dimensional coercive convex function
$f:\mathbb{R}\rightarrow \mathbb{R}\cup\{+\infty\}$, if $f$ is
symmetric about an axes $x=x_0$, i.e., $f(x_0-x)=f(x_0+x)$ for any
$x\in\mathbb{R}$, then $S f=\widetilde{S}f$.

(iii) For $n$-dimensional coercive convex function
$f:\mathbb{R}^n\rightarrow \mathbb{R}\cup\{+\infty\}$ and $u\in
S^{n-1}$, if for any $x^{\prime}\in u^{\perp}$, one-dimensional
function $f(x^{\prime}+tu)$ about $t\in\mathbb{R}$ is symmetric
about an axes $t=t_0$, then $S_uf=\widetilde{S}_uf$.

\begin{example}\label{c63}
For one-dimensional coercive convex function

\begin{equation} \label{c64}
f(x)=\left\{ \begin{aligned}
x^3\;\;{\rm if}\;\;x\geq0,\\
x^2\;\;{\rm if}\;\;x\leq0.
\end{aligned} \right.
\end{equation}
We compare $Sf$ with $\widetilde{S} f$, where
$$Sf(x)=\sup_{\lambda\in[0,1]}\inf_{x_1+x_2=x}[\lambda
f(2x_1)+(1-\lambda)f(-2x_2)]$$and
$$\widetilde{S} f(x)=\inf_{x_1+x_2=x}[\frac{1}{2}
f(2x_1)+\frac{1}{2}f(-2x_2)].$$
\end{example}
By calculation, we can get that
\begin{equation} \label{c65}
\widetilde{S} f(x)=\left\{ \begin{aligned}
\frac{(-12x-1)\sqrt{1+12x}+18x+1}{27}+2x^2\;\;{\rm if}\;\;x\geq0,\\
\frac{(12x-1)\sqrt{1-12x}-18x+1}{27}+2x^2\;\;\;{\rm if}\;\;x\leq0.
\end{aligned} \right.
\end{equation}
and
\begin{eqnarray}\label{c66}
S f(x)=g^{-1}(|x|),
\end{eqnarray}
where $g^{-1}$ is the inverse function of
\begin{eqnarray}\label{c67}
g(x)=\frac{1}{2}(\sqrt[3]{x}+\sqrt{x}),\;\;x\in[0,\infty).
\end{eqnarray}
By Matlab, we can draw their figures (see Figure 1). In the figure,
we can find that the level sets of $Sf$ and $f$ have the same size
and $Sf>\widetilde{S}f$.
\begin{figure}[htb]
\centering
 \includegraphics[height=6cm]{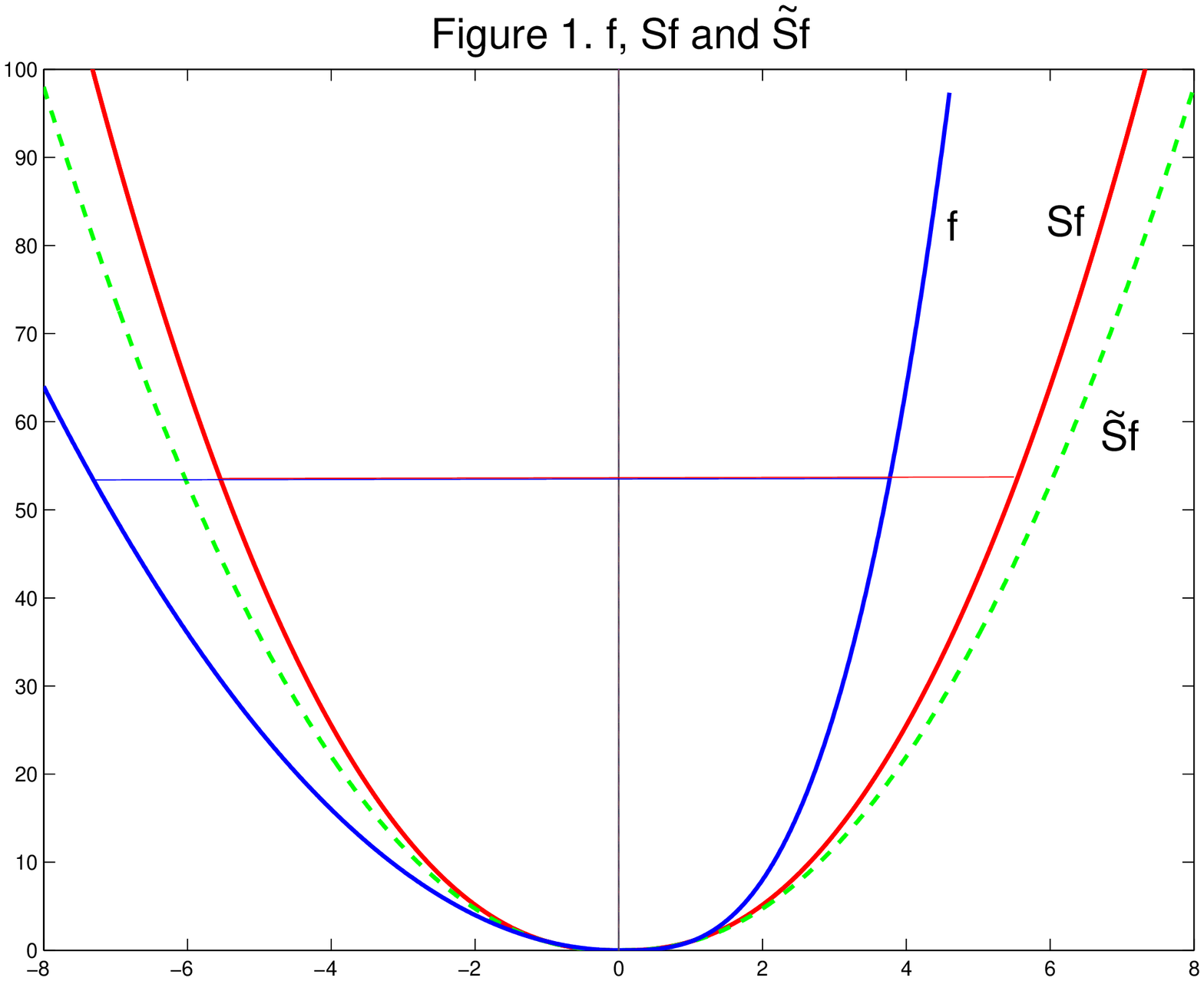}
\end{figure}
\subsection{ The relation between Definition \ref{15} and Definition
\ref{c25}.}\

In this section, we show that the two definitions are same for
log-concave functions (Theorem \ref{c72}).

\begin{lem}\label{c74}
Let $F=e^{-f}$ be a log-concave function, where
$f:\mathbb{R}^n\rightarrow\mathbb{R}$ is a coercive convex function,
then $[\bar{S}_u F>t]=S_u ([F>t])$.
\end{lem}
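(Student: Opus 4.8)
The plan is to combine the standard ``layer-cake'' bookkeeping with the elementary monotonicity of Steiner symmetrization of sets (the set analogue of Property~4 in Table~1: $A\subseteq B$ implies $P_{H}(A)\subseteq P_{H}(B)$ and $I_{A}(x')\subseteq I_{B}(x')$, hence $S_{u}A\subseteq S_{u}B$). Write $H=u^{\perp}$. We may assume $0<t<\sup F$: when $t\le 0$ both sides equal $\mathbb{R}^{n}$, and when $t\ge\sup F$ both sides equal $\emptyset$ (here one uses that $f$ is finite everywhere, so that $[F>s]=[f<-\log s]$ is a bounded open convex set for each $s>0$ and exhausts $\mathbb{R}^{n}$ as $s\downarrow 0$).

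First I would observe that, since $[F>s]$ is decreasing in $s$, so is $S_{u}[F>s]$; hence for each $x$ the set $\{s>0:x\in S_{u}[F>s]\}$ is an interval with left endpoint $0$, and therefore
\[
(\bar S_{u}F)(x)=\int_{0}^{\infty}\mathcal{X}_{S_{u}[F>s]}(x)\,ds=\sup\{s>0:x\in S_{u}[F>s]\}.
\]
Consequently $[\bar S_{u}F>t]=\bigcup_{s>t}S_{u}[F>s]$, and the lemma reduces to the set identity $\bigcup_{s>t}S_{u}[F>s]=S_{u}[F>t]$.

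The inclusion ``$\subseteq$'' is immediate from monotonicity, since $s>t$ gives $[F>s]\subseteq[F>t]$. For ``$\supseteq$'', I would argue fibrewise. Fix $x=x'+yu\in S_{u}[F>t]$ with $x'\in H$ and $y\in\mathbb{R}$. By the definition of the Steiner symmetrization of a set there are $r_{1},r_{2}\in I_{[F>t]}(x')$ with $y=\frac{1}{2}(r_{1}-r_{2})$; since $[F>t]$ is open and bounded, $I_{[F>t]}(x')$ is a bounded open interval $(a,b)$, so $|y|<\frac{1}{2}(b-a)$. Now set $\phi(r):=f(x'+ru)$, a coercive convex (hence continuous) function on $\mathbb{R}$; then $I_{[F>s]}(x')=\{r:\phi(r)<-\log s\}$, and as $s\downarrow t$ these nested open intervals increase to $\{r:\phi(r)<-\log t\}=(a,b)$, so their lengths increase to $b-a$. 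Pick $s>t$ with $|I_{[F>s]}(x')|>2|y|$; then $x'\in P_{H}([F>s])$ and $y$ lies in the interval $\{\frac{1}{2}(\rho_{1}-\rho_{2}):\rho_{1},\rho_{2}\in I_{[F>s]}(x')\}$, whence $x\in S_{u}[F>s]$. This proves ``$\supseteq$'' and hence the lemma.

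The only delicate point — and the step I expect to be the main obstacle to state cleanly — is the monotone convergence $I_{[F>s]}(x')\uparrow I_{[F>t]}(x')$, together with convergence of the lengths, as $s\downarrow t$. This is precisely where convexity, finiteness (continuity), and coercivity of $f$ along the line $x'+\mathbb{R}u$ all enter, and it is also what makes the restriction $0<t<\sup F$ the only genuinely nontrivial case.
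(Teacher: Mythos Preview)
Your proof is correct and follows essentially the same route as the paper: both directions amount to showing that $x\in S_u[F>t]$ implies $x\in S_u[F>t']$ for some $t'>t$, and conversely. The paper compresses the reverse inclusion into the single sentence ``since $S_u([F>t])$ is an open set and $F$ is continuous, there exists $t'>t$ such that $x\in S_u([F>t'])$''; your fibrewise argument with $\phi(r)=f(x'+ru)$ and the monotone convergence $I_{[F>s]}(x')\uparrow I_{[F>t]}(x')$ is exactly the content behind that sentence, made explicit.
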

\begin{proof}
By Definition \ref{c25}, if $\bar{S}_u F(x)>t$, then $x\in S_u
([F>t])$. On the other hand, if $x\in S_u ([F>t])$, since $S_u
([F>t])$ is an open set and $F$ is continuous, then there exists
$t^{\prime}>t$ such that $x\in S_u ([F>t^{\prime}])$, by (\ref{1}),
we have $\bar{S}_u F(x)>t$.
\end{proof}
\begin{thm}\label{c72}
Let $f:\mathbb{R}^n\rightarrow\mathbb{R}$ be a coercive convex
function and $u\in S^{n-1}$, then $e^{(-S_u f)}=\bar{S}_u(e^{-f})$,
where $S_uf$ and $\bar{S}_u(e^{-f})$ are given in (\ref{3}) and
(\ref{1}), respectively.
\end{thm}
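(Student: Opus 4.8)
The plan is to reduce the $n$-dimensional identity $e^{-S_u f} = \bar{S}_u(e^{-f})$ to the one-dimensional case and then to verify the latter by comparing level sets. Fix $x^{\prime}\in u^{\perp}$ and work with the one-dimensional function $\tilde f(t)=f(x^{\prime}+tu)$, which is coercive convex by the discussion following Definition \ref{15}. Since $S_u f(x^{\prime}+tu) = S\tilde f(t)$ (Remark 3)) and the familiar symmetrization $\bar{S}_u$ also acts slicewise on the graph direction $u$, it suffices to prove $e^{-S\tilde f} = \bar S(e^{-\tilde f})$ as one-variable functions. Equivalently, writing $G = e^{-\tilde f}$, I must show $\bar S G(t) = e^{-S\tilde f(t)}$ for every $t$.

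The key step is a level-set comparison. For each $s\in\mathbb{R}$, unwinding definitions gives
\begin{eqnarray}
\{t : e^{-S\tilde f(t)} > e^{-s}\} = \{t : S\tilde f(t) < s\} = [S\tilde f < s],
\end{eqnarray}
and on the other side, by Lemma \ref{c74} applied to the one-dimensional log-concave function $G$,
\begin{eqnarray}
\{t : \bar S G(t) > e^{-s}\} = S_u([G > e^{-s}]) = S_u([\tilde f < s]).
\end{eqnarray}
Now $[\tilde f < s]$ is an open interval (or empty), and its one-dimensional Steiner symmetrization $S_u([\tilde f<s])$ is the centered open interval of the same length; meanwhile Theorem \ref{b10} tells us that $[S\tilde f < s]$ is exactly a centered interval with $\mathrm{Vol}_1([S\tilde f<s]) = \mathrm{Vol}_1([\tilde f < s])$ (the theorem is stated for sublevel sets $[\,\cdot\le s\,]$, but the open-set version follows by taking $s$ slightly smaller and using monotonicity of sublevel sets in $s$, or equivalently from the fact that $S\tilde f$ is an even convex function whose infimum equals $\inf\tilde f$). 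Hence the two centered intervals coincide, so $e^{-S\tilde f}$ and $\bar S G$ have identical superlevel sets at every threshold $e^{-s}$, and therefore they are equal as functions. This finishes the one-dimensional case, and the $n$-dimensional statement follows since $x^{\prime}\in u^{\perp}$ was arbitrary.

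I expect the main obstacle to be the bookkeeping around closed versus open sublevel sets and the boundary behaviour of $\mathrm{dom}\,\tilde f$: when $\mathrm{dom}\,\tilde f$ is a bounded interval and the one-sided limits of $\tilde f$ at the endpoints differ, $S\tilde f$ can be defined by the formula $S\tilde f(t) = \tilde f(\beta - 2|t|)$ from the proof of Theorem \ref{b10} rather than by an equal-value pair, and one must check that the level-set identity $[S\tilde f<s] = S_u([\tilde f<s])$ still holds there (it does, but it needs the case analysis of Theorem \ref{b10}). A secondary subtlety is that Lemma \ref{c74} as stated assumes $f:\mathbb{R}^n\to\mathbb{R}$ (finite-valued), so for the slicewise argument I should either restrict to that hypothesis, as the theorem does, or note that on slices where $\tilde f\equiv+\infty$ both sides of the claimed identity vanish identically, so such slices contribute nothing.
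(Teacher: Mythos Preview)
Your proposal is correct and follows essentially the same route as the paper: both arguments compare superlevel sets, invoking Lemma~\ref{c74} for $\bar S_u(e^{-f})$ and the sublevel-set preservation coming from Theorem~\ref{b10} for $e^{-S_u f}$, and then conclude via the layer-cake representation. The only organizational difference is that the paper works directly in $\mathbb{R}^n$, citing the epigraph identity~(\ref{c28}) to obtain $[S_u f < s] = S_u([f < s])$ in one stroke, whereas you first reduce to one-dimensional slices and invoke Theorem~\ref{b10} there; since (\ref{c28}) is itself derived slicewise from Theorem~\ref{b10} in Remark~3), the content is the same. Note also that your worries about bounded $\mathrm{dom}\,\tilde f$ and one-sided endpoint limits do not actually arise: the theorem assumes $f:\mathbb{R}^n\to\mathbb{R}$ is finite-valued, so every slice $\tilde f$ has $\mathrm{dom}\,\tilde f=\mathbb{R}$ and only Case~(1) of Theorem~\ref{b10} is needed.
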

\begin{proof}For $t>0$, we have
\begin{eqnarray}\label{c77}
[e^{(-S_u f)}>t]=[S_uf<-\ln t]=S_u([f<-\ln t])=S_u([e^{-f}>t]),
\end{eqnarray}
where the second equality holds by (\ref{c28}).

 By Lemma
\ref{c74}, we have $[\bar{S}_u (e^{-f})>t]=S_u ([e^{-f}>t])$, thus
$[e^{(-S_u f)}>t]=[\bar{S}_u (e^{-f})>t]$. Using the ``layer-cake
representation", we have
\begin{eqnarray}\label{c80}
e^{(-S_u f)}=\int_{0}^{\infty}\mathcal {X}_{[e^{(-S_u f)}>t]}(x)dt
=\int_{0}^{\infty}\mathcal {X}_{[\bar{S}_u (e^{-f})>t]}(x)dt
=\bar{S}_u (e^{-f}).
\end{eqnarray}
\end{proof}

The continuity and convergence of Steiner symmetrization in $L^p$
space have been proved in many papers \cite{Br74,Bu97,Bu09,Fo10},
especially Proposition 3 and Theorem 2 in \cite{Fo10} are
corresponding to the properties 6-7 in Table 1.

\section{Application to functional Blaschke-Santal\'{o} inequality}

We can use the new definition to prove some important inequalities,
such as functional Blaschke-Santal\'{o} inequality,
Pr\'{e}kopa-Leindler inequality for log-concave functions,
Hardy-Littlewood inequality for log-concave functions, etc. As an
illustration, here we only use it to prove the functional
Blaschke-Santal\'{o} inequality for even convex functions.

 For a convex body $K\subset \mathbb{R}^n$, its polar about
$z$ is defined by $K^{z}=\{x\in \mathbb{R}^n: \sup_{y\in K}\langle
x-z,y-z\rangle\leq 1\}$. For a log-concave function
$f:\mathbb{R}^n\rightarrow [0,\infty)$, its polar about $z$ is
defined by
\begin{eqnarray}\label{52}
f^{z}(x)=\inf_{y\in \mathbb{R}^n}\frac{e^{-\langle
x-z,y-z\rangle}}{f(y)}.
\end{eqnarray}

To better understand this definition recall the classical Legendre
transform: For a function $\phi: \mathbb{R}^n\rightarrow
\mathbb{R}$, its Legendre transform about $z$ is defined by
$\mathcal {L}^{z}\phi(x)=\sup_{y\in\mathbb{R}^n}[\langle
x-z,y-z\rangle-\phi(y)]$. From above definition of polarity, if
$f(x)=e^{-\phi(x)}$, where $\phi(x)$ is a convex function, then
$f^{z}(x)=e^{-\mathcal {L}^{z}\phi(x)}$. Since $\mathcal
{L}^z(\mathcal {L}^z\phi)=\phi$ for a convex function $\phi$,
$(f^z)^z=f$. For $z=0$, we denote $\mathcal {L}^{0}\phi=\mathcal
{L}\phi$.

For a convex body $K$, its Santal\'{o} point $s(K)$ satisfies
$Vol(K^{s(K)})=\min_{z}Vol(K^{z})$. The Blaschke-Santal\'{o}
inequality \cite{Bl85,Sa49} states that $Vol(K)Vol(K^{s(K)})\leq
Vol(B_2^n)^2$, where $B_2^n=\{x\in\mathbb{R}^n: |x|\leq 1\}$ is the
Euclidean ball ($|\cdot|$ denote the Euclidean norm). The functional
Blaschke-Santal\'{o} inequality of log-concave functions is the
analogue of Blaschke-Santal\'{o} inequality  of convex bodies.  If
$f$ is a nonnegative integrable function on $\mathbb{R}^n$ such that
$f^{0}$ has its barycenter at $0$, then
$$\int_{\mathbb{R}^n}f(x)dx\int_{\mathbb{R}^n}f^{0}(y)dy\leq
\left(\int_{\mathbb{R}^n}e^{-\frac{1}{2}|x|^2}dx\right)^2=(2\pi)^n.$$

In the special case where the function $f$ is even, this result
follows from an earlier inequality of  Ball \cite{Ba86}; and in
\cite{Fr07}, Fradelizi and Meyer prove something more general (see
also \cite{Le09}).  Recently, Lehec \cite{Leh09} gave a direct proof
of the functional Blaschke-Santal\'{o} inequality.

 In this paper, inspired by the proof of K. Ball \cite{Ba86} for
Santal\'{o} inequality for centrally symmetric convex bodies, we
prove functional Blaschke-Santal\'{o} inequality for even convex
functions. For the non-even case, we can prove the inequality by the
similar method, but we don't prove it here.

\begin{thm}\label{c91}
(K. Ball, \cite{Ba86}) Let $f:\mathbb{R}^n\rightarrow[0,\infty)$ be
an even convex function. Assume that $0<\int e^{-f}<\infty$. Then
\begin{eqnarray}\label{d49}
\int e^{-f}\int e^{-\mathcal {L} f}\leq (2\pi)^n.
\end{eqnarray}
\end{thm}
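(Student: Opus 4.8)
The plan is to carry out, entirely at the level of functions, the classical symmetrization proof of the Blaschke--Santal\'{o} inequality for centrally symmetric convex bodies: show that the functional volume product
\[ P(f):=\left(\int_{\mathbb{R}^n}e^{-f}\right)\left(\int_{\mathbb{R}^n}e^{-\mathcal{L}f}\right) \]
does not decrease under a single Steiner symmetrization $S_u$, iterate along a suitable sequence of directions so that the symmetrized functions tend to a radial one, and finish by evaluating the radial case, where the Gaussian $f(x)=|x|^2/2$ is the extremizer and $P=(2\pi)^n$.

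First I would check that the two operations keep us inside a good class. If $f\ge 0$ is even, convex and $0<\int e^{-f}<\infty$, then $f$ is finite-valued and coercive (a non-coercive even convex $f$ would have an unbounded sublevel set containing a whole line, hence infinite integral), $\mathcal{L}f$ is again an even coercive convex function with $0<\int e^{-\mathcal{L}f}<\infty$, and by Propositions~\ref{b16} and \ref{33} (with $z=0$) $S_uf$ is again even and coercive convex, with effective domain behaving as in Remark~2) after Theorem~\ref{b10}. Moreover, applying Proposition~\ref{36} to $f$ and to $\mathcal{L}f$ gives $\int e^{-S_uf}=\int e^{-f}$ and $\int e^{-S_u(\mathcal{L}f)}=\int e^{-\mathcal{L}f}$.

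The heart of the argument, and the step I expect to be the main obstacle, is the monotonicity $P(S_uf)\ge P(f)$ for every $u\in S^{n-1}$. By the two invariances just recorded this is equivalent to $\int e^{-\mathcal{L}(S_uf)}\ge\int e^{-S_u(\mathcal{L}f)}$, a comparison between the two natural ways of ``symmetrizing'' the Legendre transform. One should \emph{not} hope for the pointwise bound $\mathcal{L}(S_uf)\le S_u(\mathcal{L}f)$ (it already fails for simple polytopal $f$, just as $S_u(K^{\circ})\subseteq(S_uK)^{\circ}$ fails), so the inequality has to be proved for the integrals. My approach would be to slice orthogonally to $u$: writing points as $x'+tu$ with $x'\in u^{\perp}$, I would express $\mathcal{L}(S_uf)$ and $S_u(\mathcal{L}f)$ through the one-dimensional convex functions $t\mapsto f(x'+tu)$ and their Legendre transforms, use the explicit $\sup_{\lambda\in[0,1]}\inf_{t_1+t_2=t}$ form of Definition~\ref{15} together with the description of $S_uf$ on level sets from Theorem~\ref{b10}, and reduce the required estimate to a one-dimensional Santal\'{o}-/Brunn--Minkowski-type inequality on the sections, in the spirit of Meyer--Pajor's proof that $\mathrm{Vol}(K)\mathrm{Vol}(K^{\circ})$ is non-decreasing under geometric Steiner symmetrization. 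It is exactly here that the present definition earns its keep: because $S_u$ preserves the integral of $e^{-f}$ (Proposition~\ref{36}), which fails for the transformation $\widetilde S_u$ of \cite{Ar04}, it is the \emph{product} $P$, rather than either factor alone, that becomes monotone.

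With the monotonicity available the rest is comparatively routine. By the Brascamp--Lieb--Luttinger/Burchard/Fortier type convergence result (the seventh row of the table in Section~2; see \cite{Br74,Bu97,Fo10}) one can pick directions $u_1,u_2,\dots$ so that $e^{-f_k}\to e^{-g}$ in $L^p(\mathbb{R}^n)$, where $f_k:=S_{u_k}\cdots S_{u_1}f$ and $g$ is even, convex, radial and $\int e^{-g}=\int e^{-f}$. Since $P(f)\le P(f_k)$ for all $k$, it suffices to pass to the limit, and this is legitimate because all $f_k$ stay below the fixed finite-valued coercive radial function $\Phi(x)=\max_{|y|\le|x|}f(y)$ (Steiner symmetrization of an even function does not shrink the origin-centered sublevel balls), so that $e^{-\mathcal{L}f_k}\le e^{-\mathcal{L}\Phi}\in L^1$ and dominated convergence gives $P(f)\le P(g)$. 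Finally, writing $g(x)=\psi(|x|)$ with $\psi$ even convex, one has $\mathcal{L}g(x)=\psi^{*}(|x|)$ with $\psi^{*}$ the one-dimensional Legendre transform of $\psi$, and passing to polar coordinates turns $P(g)\le(2\pi)^n$ into the one-dimensional inequality
\[ \int_0^{\infty}r^{n-1}e^{-\psi(r)}\,dr\cdot\int_0^{\infty}s^{n-1}e^{-\psi^{*}(s)}\,ds\le\left(\int_0^{\infty}r^{n-1}e^{-r^{2}/2}\,dr\right)^{2}, \]
since $(n\omega_n)^2\left(\int_0^{\infty}r^{n-1}e^{-r^2/2}\,dr\right)^2=(2\pi)^n$, where $\omega_n=\mathrm{Vol}(B_2^n)$. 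This is the radial case of the functional Santal\'{o} inequality, with equality exactly for $\psi(r)=r^2/2$; I would establish it directly in one variable, and this is the only step that actually produces the constant $(2\pi)^n$.
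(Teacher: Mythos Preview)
Your plan coincides with the paper's proof: invariance of $\int e^{-f}$ under $S_u$ (Proposition~\ref{36}), monotonicity of $\int e^{-\mathcal{L}f}$ under $S_u$ (the paper's Lemma~\ref{c123}), convergence of iterated symmetrizations to a radial function (row~7 of Table~1), and the radial endgame (the paper's Lemma~\ref{c101}, proved exactly by the substitution $r=e^t$ and Pr\'ekopa--Leindler that you allude to).

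Two remarks on the one step you correctly flag as the crux. First, your reformulation of the monotonicity as $\int e^{-\mathcal{L}(S_uf)}\ge\int e^{-S_u(\mathcal{L}f)}$ is an unnecessary detour: since $\int e^{-S_u(\mathcal{L}f)}=\int e^{-\mathcal{L}f}$ by Proposition~\ref{36}, the paper simply proves $\int e^{-\mathcal{L}f}\le\int e^{-\mathcal{L}(S_uf)}$ directly, never introducing $S_u(\mathcal{L}f)$. Second, the paper's execution is shorter and slightly different from the Meyer--Pajor style reduction you outline: rather than working with the one-dimensional functions $t\mapsto f(x'+tu)$ and their Legendre transforms, one unwinds the definition of $(\mathcal{L}(S_uf))(x'+tu)$, takes $\lambda=\tfrac12$ in the $\sup_{\lambda}\inf$ (this is the place where the flexibility of Definition~\ref{15} is exploited), and obtains the pointwise Pr\'ekopa--Leindler hypothesis
\[
e^{-(\mathcal{L}(S_uf))(x'+tu)}\ \ge\ \sup_{x_1'+x_2'=x'} e^{-\frac12(\mathcal{L}f)(2x_1'+tu)}\, e^{-\frac12(\mathcal{L}f)(2x_2'-tu)} .
\]
One then applies Pr\'ekopa--Leindler on the $(n-1)$-dimensional slice $u^{\perp}$ for each fixed $t$, and evenness of $\mathcal{L}f$ (since $f$ is even) makes the two resulting factors equal, giving $\int_{u^{\perp}} e^{-\mathcal{L}(S_uf)(\cdot+tu)}\ge\int_{u^{\perp}} e^{-\mathcal{L}f(\cdot+tu)}$; integrate in $t$ to finish. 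So the ``Brunn--Minkowski-type inequality on the sections'' is $(n-1)$-dimensional Pr\'ekopa--Leindler on the slices orthogonal to $u$, not a one-dimensional statement along $u$. Your domination by $\Phi(x)=\max_{|y|\le|x|}f(y)$ for the limit passage is a welcome addition; the paper is terser here and just invokes $L^1$ continuity.
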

First, we give the following lemmas.
\begin{lem}\label{c123}
Let $f:\mathbb{R}^n\rightarrow[0,\infty)$ be an even convex function
and $u\in S^{n-1}$. Assume that $0<\int e^{-f}<\infty$. Then
\begin{eqnarray}\label{d50}
\int e^{-\mathcal {L} f}\leq\int e^{-\mathcal {L} (S_uf)}.
\end{eqnarray}
\end{lem}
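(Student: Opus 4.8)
The plan is to compare the two integrals $\int e^{-\mathcal{L}f}$ and $\int e^{-\mathcal{L}(S_uf)}$ by working on the level-set (``layer-cake'') side and exploiting the geometric Steiner symmetrization of the polar body. The first step is to translate the statement into the language of the polar transform: since $f$ is even and convex, $e^{-f}$ is a log-concave function, and by \eqref{52} the polar $(e^{-f})^0 = e^{-\mathcal{L}f}$; similarly $(e^{-S_uf})^0 = e^{-\mathcal{L}(S_uf)}$. So the inequality \eqref{d50} is equivalent to $\int (e^{-f})^0 \le \int (e^{-S_uf})^0$. I would first record that, by Proposition \ref{b16}, $S_uf$ is again an even coercive convex function (evenness in the remaining directions is inherited from that of $f$ together with Lemma \ref{38}, and symmetry about $u^\perp$ is automatic), so $\mathcal{L}(S_uf)$ makes sense and $e^{-\mathcal{L}(S_uf)}$ is a genuine log-concave function with the right barycenter properties.

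The main idea is the standard fact that \emph{polarity reverses Steiner symmetrization}: for a convex body $K$ symmetric about the origin and $u \in S^{n-1}$, one has $S_u K \subseteq$ (some body) but more precisely the polar of the Steiner symmetral contains the Steiner symmetral of the polar, so that $\mathrm{Vol}((S_u K)^0) \ge \mathrm{Vol}(K^0)$ — equivalently, Steiner symmetrization decreases the volume of the polar body, which is exactly how Ball's argument for the symmetric Blaschke--Santal\'o inequality proceeds. I would invoke \eqref{c28}, namely ${\rm cl}({\rm epi}(S_u f)) = S_{\tilde u^\perp}({\rm cl}({\rm epi}f))$, to pass between the functional symmetrization and the geometric Steiner symmetrization of the epigraph in $\mathbb{R}^{n+1}$. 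Combining this with Proposition \ref{36} (invariance of $\int e^{-f}$), the inequality reduces to a purely geometric claim about how Steiner symmetrization of a convex set interacts with polarity. Concretely, writing $K_s = [f \le s]$ for the sublevel sets, I would show $[S_uf \le s] = S_u(K_s)$ (this is contained in \eqref{c32}/Remark 2 applied fiberwise), and then use the one-dimensional rearrangement to compare the polar bodies fiber by fiber: the Legendre transform restricted to the line $\mathbb{R}u$ through a projected point behaves like a one-dimensional polar, and the ``balancing'' of the two branches of $f(x'+tu)$ performed by $S_uf$ can only help concentrate the polar. Finally I would integrate the resulting pointwise/level-set inequality against $e^{-s}\,ds$ using Fubini, exactly as in \eqref{c90}, to recover \eqref{d50}.

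I would organize the steps as: (1) reduce \eqref{d50} to $\int (e^{-f})^0 \le \int (e^{-S_uf})^0$ via the identity $f^0 = e^{-\mathcal{L}f}$; (2) use \eqref{c28} to realize $S_uf$ through the Steiner symmetrization $S_{\tilde u^\perp}$ of the epigraph; (3) establish the key geometric inequality $\mathrm{Vol}_{n+1}\big((S_{\tilde u^\perp} E)^\circ\big) \le \mathrm{Vol}_{n+1}(E^\circ)$ for the relevant convex set $E = {\rm cl}({\rm epi}f)$ — or its level-set analogue $\int (S_uf)^0 \ge \int f^0$ — by the fiberwise comparison using the one-dimensional symmetrization $Sf$ from Theorem \ref{b10}; (4) convert back via the layer-cake/Fubini argument. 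The hardest step is (3): I expect the main obstacle to be verifying carefully that the Legendre transform of the symmetrized fiber is pointwise \emph{dominated} by (a suitable rearrangement of) the Legendre transform of the original fiber, since $\mathcal{L}$ is nonlocal — one cannot simply symmetrize each fiber of $f$ independently and read off what happens to $\mathcal{L}f$. The way around this is to work on the level-set side where the symmetrization \emph{is} fiberwise (Remark 2, \eqref{c32}), reduce to the one-dimensional inequality for sublevel intervals, and there Ball's elementary computation (balancing $f(x'+2t_1u)$ against $f(x'-2t_2u)$, which is precisely what the $\inf_{t_1+t_2=t}$ in Definition \ref{15} does) gives the needed inclusion of polar fibers; the extra $\sup_{\lambda}$ in our Definition \ref{15}, compared with $\widetilde S_u$, is exactly what guarantees the level sets are preserved and hence that no volume is lost in the base, which is what makes the polar volume genuinely decrease.
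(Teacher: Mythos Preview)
Your proposal has a genuine gap at the crucial step~(3). The epigraph $E=\mathrm{cl}(\mathrm{epi}\,f)\subset\mathbb{R}^{n+1}$ is an unbounded convex set, so its polar $E^\circ$ has $(n+1)$-dimensional volume zero and the inequality $\mathrm{Vol}_{n+1}\big((S_{\tilde u^\perp}E)^\circ\big)\le \mathrm{Vol}_{n+1}(E^\circ)$ carries no information (besides also pointing in the wrong direction for what you need). The ``level-set analogue'' you fall back on, $\int (e^{-S_uf})^0\ge \int (e^{-f})^0$, is simply a restatement of \eqref{d50}, so step~(3) is circular. The deeper obstruction, which you correctly flag but do not resolve, is that $\mathcal{L}$ is nonlocal in the $u$-direction: knowing that $[S_uf\le s]=S_u([f\le s])$ tells you nothing direct about the sublevel sets of $\mathcal{L}(S_uf)$ versus those of $\mathcal{L}f$, and there is no ``inclusion of polar fibers'' to invoke without an additional analytic ingredient.

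The paper's proof supplies exactly that missing ingredient, and it is the Pr\'ekopa--Leindler inequality, applied on the hyperplane $H=u^\perp$. The argument is a direct computation: writing $x=x'+tu$ with $x'\in H$, one expands $\mathcal{L}(S_uf)(x'+tu)$ from Definition~\ref{15}, takes $\lambda=\tfrac12$ inside the $\sup_{\lambda}$ (this is where the extra $\sup_\lambda$ costs nothing---any $\lambda$ gives an upper bound), and splits $x'=x_1'+x_2'$ to obtain
\[
\mathcal{L}(S_uf)(x'+tu)\;\le\;\tfrac12\,\mathcal{L}f(2x_1'+tu)+\tfrac12\,\mathcal{L}f(2x_2'-tu).
\]
Exponentiating and applying Pr\'ekopa--Leindler over $H$ gives $\int_H e^{-\mathcal{L}(S_uf)(\cdot+tu)}\ge\big(\int_H e^{-\mathcal{L}f(\cdot+tu)}\big)^{1/2}\big(\int_H e^{-\mathcal{L}f(\cdot-tu)}\big)^{1/2}$, and the two factors on the right coincide because $f$ (hence $\mathcal{L}f$) is even. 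Fubini in $t$ finishes. Your geometric picture is good intuition, but the actual inequality lives at the level of a Pr\'ekopa--Leindler step, not a polar-volume comparison.
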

\begin{proof}
After a linear transformation, it may be supposed that
$H=u^{\perp}=\{(x_i)_{i=1}^{i=n}:x_n=0\}$. For $f$ and
$t\in\mathbb{R}$, we define a new function
$f_{(t)}(x^{\prime}):=f(x^{\prime}+tu)$, where $x^{\prime}\in H$.

By the definition of Steiner symmetrization, for
$x^{\prime}=x_1^{\prime}+x_2^{\prime}$, where $x^{\prime}$,
$x_1^{\prime}$ and $x_2^{\prime}\in H$, let $(x^{\prime},t)$ denote
$x^{\prime}+tu$, we have
\begin{eqnarray}\label{c97}
&&(\mathcal {L}(S_u f))_{(t)}(x^{\prime})=(\mathcal {L}(S_u f))(x^{\prime}+tu)\nonumber\\
&=&\sup_{(y^{\prime},s)\in H\times
\mathbb{R}}\left[\langle(x^{\prime},t),(y^{\prime},s)\rangle-(S_u
f)(y^{\prime}+su)\right]\nonumber\\
&=&\sup_{(y^{\prime},s)\in H\times
\mathbb{R}}[\langle(x^{\prime},t),(y^{\prime},s)\rangle-\sup_{\lambda\in[0,1]}
\inf_{s_1+s_2=s}(\lambda f(y^{\prime}+2s_1u)+(1-\lambda)f(y^{\prime}-2s_2u))]\nonumber\\
&=&\sup_{(y^{\prime},s)\in H\times
\mathbb{R}}\inf_{\lambda\in[0,1]}\sup_{s_1+s_2=s}[\langle(x^{\prime},t),
(y^{\prime},s)\rangle-(\lambda f(y^{\prime}+2s_1u)+(1-\lambda)f(y^{\prime}-2s_2u))]\nonumber\\
&\leq&\sup_{(y^{\prime},s)\in H\times
\mathbb{R}}\sup_{s_1+s_2=s}[\langle(x^{\prime},t),(y^{\prime},s)
\rangle-(\frac{1}{2} f(y^{\prime}+2s_1u)+\frac{1}{2}f(y^{\prime}-2s_2u))]\nonumber\\
&=&\sup_{(y^{\prime},s)\in H\times
\mathbb{R}}\sup_{s_1\in\mathbb{R}}[\langle(x^{\prime},t),
(y^{\prime},s)\rangle-(\frac{1}{2} f(y^{\prime}+2s_1u)+\frac{1}{2}f(y^{\prime}+2(s_1-s)u))]\nonumber\\
&\leq&\frac{1}{2}\sup_{(y^{\prime},s)\in H\times
\mathbb{R}}\sup_{s_1\in\mathbb{R}}[\langle(2x_1^{\prime},t),
(y^{\prime},2s_1)\rangle-f(y^{\prime}+2s_1u)]\nonumber\\
&~&+\frac{1}{2}\sup_{(y^{\prime},s)\in H\times
\mathbb{R}}\sup_{s_1\in\mathbb{R}}[\langle(2x_2^{\prime},-t),
(y^{\prime},2s_1-2s)\rangle-f(y^{\prime}+2(s_1-s)u)]\nonumber\\
&=&\frac{1}{2}[(\mathcal {L}f)(2x_1^{\prime}+tu)+(\mathcal
{L}f)(2x_2^{\prime}-tu)],
\end{eqnarray}
where the first inequality is by choosing $\lambda=\frac{1}{2}$ and
the second inequality is by $\sup\sup(A+B)\leq \sup\sup A+\sup\sup
B$.

Since $x_1^{\prime}$ and $x_2^{\prime}$ are arbitrary, by
(\ref{c97}), we can get
\begin{eqnarray}\label{c95}
\left(e^{-(\mathcal {L}(S_u f))_{(t)}}\right)(x^{\prime})\geq
\sup_{x_1^{\prime}+x_2^{\prime}=x^{\prime}}
\left(e^{-\frac{1}{2}(\mathcal {L}f)_{(t)}(2x_1^{\prime})}\times
e^{-\frac{1}{2}(\mathcal {L}f)_{(-t)}(2x_2^{\prime})}\right).
\end{eqnarray}
By (\ref{c95}) and Pr\'{e}kopa-Leindler inequality, we have
\begin{eqnarray}\label{c99}
\int_{H}e^{-(\mathcal {L}(S_u
f))_{(t)}}(x^{\prime})dx^{\prime}&\geq&\left(\int_{H}e^{-(\mathcal
{L}f)_{(t)}(x^{\prime})}dx^{\prime}\right)^{\frac{1}{2}}\left(\int_{H}e^{-(\mathcal
{L}f)_{(-t)}(x^{\prime})}dx^{\prime}\right)^{\frac{1}{2}}\nonumber\\&=&\int_{H}e^{-(\mathcal
{L}f)_{(t)}(x^{\prime})}dx^{\prime},
\end{eqnarray}
where the last equality is by $\mathcal {L}f$ is even (since $f$ is
even). Thus, by Fubini's theorem, we can get the desired inequality.
\end{proof}
\begin{lem}\label{c101}
Let $h(t)$ be an increasing convex function defined on $[0,+\infty)$
and $\int_{0}^{+\infty}e^{-h(t)}dt<\infty$. Let $\mathcal
{L}(h(|\cdot|))$ denote the Legendre transform of function $h(|x|)$
defined on $\mathbb{R}^n$. Then
\begin{eqnarray}\label{c102}
\int_{\mathbb{R}^n}e^{-h(|x|)}dx\int_{\mathbb{R}^n}e^{-(\mathcal
{L}(h(|\cdot|)))(x)}dx\leq (2\pi)^n.
\end{eqnarray}
\end{lem}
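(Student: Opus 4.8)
The plan is to reduce the $n$-dimensional radial inequality to a one-dimensional one, and then to invoke the Pr\'ekopa--Leindler inequality after a logarithmic change of variables. First I would compute the Legendre transform of a radial function: since $\sup_{|y|=\rho}\langle x,y\rangle=\rho|x|$ for $\rho\ge0$, one gets $(\mathcal L(h(|\cdot|)))(x)=\tilde h(|x|)$, where $\tilde h(\sigma):=\sup_{\rho\ge0}(\rho\sigma-h(\rho))$ is the one-dimensional Legendre transform of $h$; note that $\tilde h$ is again nondecreasing and convex. The hypothesis $\int_0^\infty e^{-h}<\infty$ together with convexity forces $h$, and hence $\tilde h$, to grow at least linearly, so all integrals below are finite. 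Writing $\omega_n$ for the volume of the Euclidean unit ball $B_2^n$ and $\gamma_n:=\int_0^\infty r^{n-1}e^{-r^2/2}\,dr$, and using the identity $n\omega_n\gamma_n=\int_{\mathbb R^n}e^{-|x|^2/2}\,dx=(2\pi)^{n/2}$, a pass to polar coordinates shows that the lemma is equivalent to
\[
I_1I_2\le\gamma_n^2,\qquad I_1:=\int_0^\infty r^{n-1}e^{-h(r)}\,dr,\quad I_2:=\int_0^\infty s^{n-1}e^{-\tilde h(s)}\,ds .
\]

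The core step would be the substitution $r=e^{a}$, $s=e^{b}$, which turns these into $I_1=\int_{\mathbb R}e^{-p(a)}\,da$ and $I_2=\int_{\mathbb R}e^{-q(b)}\,db$ with $p(a)=h(e^{a})-na$ and $q(b)=\tilde h(e^{b})-nb$. Both $p$ and $q$ are convex on $\mathbb R$, being a nondecreasing convex function composed with the convex map $t\mapsto e^t$ minus a linear term. Moreover the Fenchel--Young inequality $h(\rho)+\tilde h(\sigma)\ge\rho\sigma$, specialized to $\rho=e^a$, $\sigma=e^b$, reads
\[
p(a)+q(b)\ \ge\ e^{a+b}-n(a+b)\ =:\ \psi(a+b)\qquad(a,b\in\mathbb R).
\]
I would then apply the one-dimensional Pr\'ekopa--Leindler inequality with $\lambda=\tfrac12$ to $F=e^{-p}$, $G=e^{-q}$, $M(z)=e^{-\psi(2z)/2}$: the displayed bound is exactly $M(\tfrac{x+y}{2})\ge F(x)^{1/2}G(y)^{1/2}$, so $\big(\int_{\mathbb R}M\big)^{2}\ge I_1I_2$. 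Finally, undoing the logarithmic substitution gives $\int_{\mathbb R}M(z)\,dz=\tfrac12\int_{\mathbb R}e^{-\frac12(e^{w}-nw)}\,dw=\int_0^\infty r^{n-1}e^{-r^2/2}\,dr=\gamma_n$, which says precisely that the envelope $\psi$ is the Gaussian weight $r^{n-1}e^{-r^2/2}$ expressed in these coordinates. Hence $I_1I_2\le\gamma_n^2$, and multiplying back through by $(n\omega_n)^2$ yields $(2\pi)^n$.

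I expect the only genuinely clever point to be recognizing that the change of variables $r=e^a$ converts the problem into a Pr\'ekopa--Leindler problem whose extremal profile $\psi(c)=e^{c}-nc$ is the log-image of the Gaussian density carrying the Jacobian weight; once this is seen, the convexity of $p$ and $q$, the translation of Young's inequality, and the recomputation of $\gamma_n$ are all routine. The minor technical matters to dispatch are the finiteness of $I_1$ and $I_2$ (handled as indicated) and the degenerate situation in which $\tilde h$ attains the value $+\infty$ on a half-line (there $e^{-q}$ simply vanishes, and Pr\'ekopa--Leindler still applies verbatim); neither affects the argument.
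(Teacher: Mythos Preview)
Your proposal is correct and follows essentially the same route as the paper: compute the Legendre transform of the radial function to reduce to a one-dimensional statement in polar coordinates, make the logarithmic substitution $r=e^t$, and apply the one-dimensional Pr\'ekopa--Leindler inequality with the Gaussian profile as the middle function, the key pointwise estimate being the Fenchel--Young inequality $h(e^a)+\tilde h(e^b)\ge e^{a+b}$. The only differences are cosmetic (you name Fenchel--Young explicitly and remark on the convexity of $p,q$, which Pr\'ekopa--Leindler does not actually require).
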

\begin{proof} By spherical coordinate transformation, we have
\begin{eqnarray}\label{c104}
\int_{\mathbb{R}^n}e^{-h(|x|)}dx=\int_{S^{n-1}}\left[\int_{0}^{+\infty}e^{-h(r)}r^{n-1}dr\right]d\omega.
\end{eqnarray}
For any $x\in\mathbb{R}^n$,  let $x=t_x\theta_x$, where
$\theta_x=\frac{x}{|x|}\in S^{n-1}$ for $|x|\neq0$ and $\theta_x$ is
any unit vector for $|x|=0$, and $t_x=|x|$. Then, we have
\begin{eqnarray*}\label{c105}
\mathcal {L}(h(|\cdot|))(x)&=&\sup_{y\in\mathbb{R}^n}(\langle
x,y\rangle-h(|y|))\nonumber\\
&=&\sup_{\theta_y\in S^{n-1},t_y\geq 0}(\langle
t_x\theta_x,t_y\theta_y\rangle-h(t_y))=\sup_{t_y\geq
0}(t_xt_y-h(t_y)).
\end{eqnarray*}
Thus, we have
\begin{eqnarray}\label{c106}
\int_{\mathbb{R}^n}e^{-(\mathcal
{L}(h(|\cdot|)))(x)}dx=\int_{S^{n-1}}\left[\int_{0}^{+\infty}\left(e^{-\sup_{t\geq
0}(rt-h(t))}\right)r^{n-1}dr\right]d\omega.
\end{eqnarray}
For $r\in[0,+\infty)$, let $f_1(r)=\left(e^{-h(r)}\right)r^{n-1}$,
$f_2(r)=\left(e^{-\sup_{t\geq 0}(rt-h(t))}\right)r^{n-1}$ and
$f_3(r)=\left(e^{-\frac{r^2}{2}}\right)r^{n-1}$. Next, we shall
prove that
\begin{eqnarray}\label{c110}
\int_{0}^{+\infty}f_1(r)dr\int_{0}^{+\infty}f_2(r)dr\leq\left(\int_{0}^{+\infty}f_3(r)dr\right)^2.
\end{eqnarray}
Let $g_i(t)=f_i(e^t)e^t$ for $i=1,2,3,$ then
$\int_{0}^{+\infty}f_i(r)dr=\int_{\mathbb{R}}g_i(t)dt$ and for every
$s,t\in\mathbb{R}$, $g_1(s)g_2(t)\leq
\left(g_3(\frac{s+t}{2})\right)^2$. Hence inequality (\ref{c110})
follows from Pr\'{e}kopa-Leindler inequality.

By (\ref{c104}), (\ref{c106}) and (\ref{c110}), we have
\begin{eqnarray*}\label{c113}
&&\int_{\mathbb{R}^n}e^{-h(|x|)}dx\int_{\mathbb{R}^n}e^{-(\mathcal
{L}(h(|\cdot|)))(x)}dx\nonumber\\
&=&\omega_n^2\int_{0}^{+\infty}f_1(r)dr\int_{0}^{+\infty}f_2(r)dr\leq\omega_n^2\left(\int_{0}^{+\infty}e^{-\frac{r^2}{2}}r^{n-1}dr\right)^2
=(2\pi)^n,
\end{eqnarray*}
where $\omega_n=n\pi^{n/2}/\Gamma(1+\frac{n}{2})$ is the surface
area of Euclidean unit ball.
\end{proof}

\noindent{\it Proof of Theorem \ref{c91}.} By the integral
invariance under Steiner symmetrization (Proposition \ref{36}), for
any $u\in S^{n-1}$, we have
\begin{eqnarray}\label{d44}
\int_{\mathbb{R}^n}e^{-(S_uf)(x)}dx=\int_{\mathbb{R}^n}e^{-f(x)}dx.
\end{eqnarray}
By (\ref{d44}) and Lemma \ref{c123}, we have
\begin{eqnarray}\label{c122}
\int e^{-f}\int e^{-\mathcal {L} f}\leq\int e^{-S_uf}\int
e^{-\mathcal {L}(S_uf)}.
\end{eqnarray}
By property 7 in Table {\ref{a59}}, for log-concave function
$e^{-f}\in L^1(\mathbb{R}^n)$,  there exists a sequence of
directions $\{u_i\}_{i=1}^{\infty}\subset S^{n-1}$ such that
$e^{-S_{u_1,\dots,u_i}f}$ converges to a radial function
$e^{-h(|\cdot|)}$, where $h(t)$ is a one-dimensional increasing
convex function defined on $[0,+\infty)$.
 By
(\ref{c122}) and Lemma \ref{c101} and the continuity of integral in
$L^1(\mathbb{R}^n)$, we have
\begin{eqnarray}\label{c125}
\int e^{-f}\int e^{-\mathcal {L}f} &\leq& \lim_{i\rightarrow
+\infty} \int e^{-S_{u_1,\dots,u_i}f}\int e^{-\mathcal
{L}(S_{u_1,\dots,u_i}f)}\nonumber\\
&=&\int e^{-h(|\cdot|)}\int e^{-\mathcal {L}(h(|\cdot|))}\leq
(2\pi)^n.
\end{eqnarray}
This completes the proof.\hfill $\Box$\\

\bibliographystyle{amsalpha}

\end{document}